\newtheorem{theorem}{Theorem}
\newtheorem{proposition}[theorem]{Proposition}
\newtheorem{lemma}[theorem]{Lemma}
\newtheorem{corollary}[theorem]{Corollary}
\theoremstyle{definition}
\newtheorem{definition}[theorem]{Definition}
\newtheorem{example}[theorem]{Example}
\crefname{ineq}{inequality}{inequalities}
\crefname{prop}{Proposition}{Propositions}
\newcommand{\RR}{\mathbbm{R}}
\newcommand{\ZZ}{\mathbbm{Z}}
\newcommand\N{{\mathbb N}}
\newcommand{\eins}[1][\mbox{}]{\ensuremath\mathbbm{1}_{#1}}
\newcommand{\bundle}{a}
\newcommand{\bundleStar}{\bundle^*}
\newcommand{\piInverse}{\pi^{-1}}
\newcommand{\decomposable}{A}
\DeclareMathOperator{\vertices}{vert}
\DeclareMathOperator{\valuationOperator}{val}
\newcommand{\valuation}[1][b]{\valuationOperator^{#1}}
\newcommand{\valuationtilde}[1][b]{\widetilde{\valuationOperator}^{#1}}
\DeclareMathOperator{\Valuation}{Val}
\newcommand{\weightOperator}{w}
\newcommand{\weight}[1][b]{\weightOperator^{#1}}
\newcommand{\weighttilde}[1][b]{\tilde{\weightOperator}^{#1}}
\newcommand{\product}[2]{\langle #1, #2 \rangle}
\newcommand{\demandOperator}{D}
\newcommand{\demand}[2]{\demandOperator({#1},#2)}
\newcommand{\price}{p}
\newcommand{\subd}{\Delta}
\newcommand{\partition}{S}
\newcommand{\rr}{r}
\newcommand{\point}{q}
\newcommand{\letter}{b}
\newcommand{\allocationOperator}{\mu}
\newcommand{\allocation}[1][b]{\allocationOperator^{#1}}
\newcommand{\setOfItems}{S}
\newcommand{\mset}[1]{\ensuremath{ \left\lbrace #1 \right\rbrace }}
\newcommand{\given}{\,\middle\vert\,}
\renewcommand{\mapsto}{\longmapsto}
\newcommand{\inner}[1]{\left\langle \, #1  \,\right\rangle}
\newcommand{\ma}{\begin{pmatrix} } 
	\newcommand{\trix}{\end{pmatrix}}
\newcommand{\sma}{\left(\begin{smallmatrix} } \newcommand{\strix}{\end{smallmatrix}\right)}
\newcommand\conv{\operatorname{conv}}
\DeclareMathOperator{\lift}{lift}
\DeclareMathOperator*{\argmax}{arg\,max}
\begin{document}
\title[Competitive equilibrium with quadratic pricing]{Competitive
  equilibrium always exists for combinatorial auctions with graphical pricing schemes}
\author[Brandenburg]{Marie-Charlotte Brandenburg}
\author[Haase]{Christian Haase}
\author[Tran]{Ngoc Mai Tran}

\date{}

\begin{abstract}
We show that a competitive equilibrium always exists in combinatorial auctions with anonymous graphical valuations and pricing, using discrete geometry. This is an intuitive and easy-to-construct class of valuations that can model both complementarity and substitutes, and to our knowledge, it is the first class besides gross substitutes that have guaranteed competitive equilibrium. We prove through counter-examples that our result is tight, and we give explicit algorithms for constructing competitive pricing vectors. We also give extensions to multi-unit combinatorial auctions (also known as product-mix auctions). Combined with theorems on graphical valuations and pricing equilibrium of Candogan, Ozdagar and Parrilo, our results indicate that quadratic pricing is a highly practical method to run combinatorial auctions.
\end{abstract}

\maketitle

\section{Introduction}
In a \emph{multi-unit combinatorial auction}, also known as product-mix auction \cite{Tran_Yu_Product-Mix_Auctions,Baldwin_Klemperer_Understanding_Preferences},
multiple agents can make simultaneous bids on multiple subsets of indivisible goods of distinct types. The running example in this paper is the Cutlery Auction of three items, a fork, a knife and a spoon, among three agents, Fruit, Spaghetti and Steak. Each is willing to pay at most 1 dollar for their favorite combination and no other: for Fruit, it is (knife, spoon), for Spaghetti, it is (fork, spoon), and for Steak, it is (fork, knife).

In the auctioneer's disposal is the ability to \emph{design} the auction. While this is a vast topic \cite{nisan2015algorithmic,borgers2015introduction}, for this paper we follow the setup of algorithmic auction design \cite{roughgarden2015prices,candogan_parrilo_pricing_equilibria}, namely, that the auctioneer can restrict the class of bid functions (valuations) that the agents can submit as well as the class of price functions that can be announced. The problem faced by the auctioneer is the following:
Given a bundle of items and an equilibrium concept $E$, find a valuation class $\mathcal{V}$ and a pricing class $\mathcal{P}$ such that for any auction with valuations in $\mathcal{V}$, $E$ is guaranteed to exist for some allocation $\allocationOperator$ and some pricing function $\price \in \mathcal{P}$. 

An ideal market would allocate the resources efficiently (maximize welfare) and set prices to achieve a competitive equilibrium where no participant wants to deviate. Existence of such equilibria is a function of the class of valuations $\mathcal{V}$, the pricing rules $\mathcal{P}$, and the definition of equilibrium. 

In this paper, we discuss two commonly studied notions equilibria: competitive equilibrium (CE) and pricing equilibrium (PE). The formal definitions are given in \Cref{sec:economics}. The difference lies in the involvement of the seller. For CE, the seller simply wants all other participants to be happy with their allocations at the announced price. Such allocations are called competitive allocations. For PE, the seller plays the same role as all the other agents, namely, they personally want to maximize their profit without caring for others. In this case, PE exists if the allocation and prices satisfy both the sellers' and all the other agents' agenda.

The simplest pricing scheme is linear pricing, where each type of item is assigned a price $p_i$, and the price of a bundle $S$ is $\sum_{i\in S}p_i$. For linear pricing,  both notions coincide and are known as \emph{the} Walrasian equilibrium. This is a classical concept in economics, see \cite{bichler2021walrasian} for a recent survey. In many combinatorial auctions, however, a Walrasian equilibrium may not exist. The Unimodularity Theorem \cite{danilov2004discrete} states that Walrasian equilibrium is guaranteed to exist at \emph{any} feasible bundle for \emph{any} auction with valuations in $\mathcal{V}$ if and only if $\mathcal{V}$ is the set of gross substitutes (GS) valuations.  

Unfortunately, GS valuations have several well-documented undesirable properties that hinder their applications \cite{roughgarden2015prices,candogan_parrilo_pricing_equilibria,leme2017gross}. First, the GS condition
imposes exponentially many linear inequalities on the valuation
$v$. It takes $O(2^{n} - poly(n))$ operations to check if an arbitrary
function $v$ is gross substitutes. Existing methods to
generate GS valuations from `unit' functions and elementary
operations with natural economics interpretations can only generate strict subclasses such as OXS and EAV. Second, the GS valuations cannot exhibit complementarity,
where items $i$ and $j$ can enhance each other, so the value of the
pair $v(ij)$ can be more than the sum of its parts $v(i) + v(j)$. This is the case for the Cutlery Auction, where all items
are complements. 

By the Unimodularity Theorem, deviating from gross substitutes means
deviating from guaranteed equilibrium \emph{under linear
	pricing}. Therefore, a major challenge in combinatorial auction is
to identify analogues of the Unimodularity Theorem for a more
practical class of valuations under a more general pricing rule
$\mathcal{P}$. Our main theorem establishes such a result for the
class of \emph{graphical valuations} with \emph{anonymous graphical
	pricing} under the concept of \emph{competitive equilibrium}.

\begin{definition}\label{defn:graphical}
	A valuation is an assignment 
	$\valuationOperator: 2^{[n]} \to \RR$. It is called \emph{graphical} if there exists a simple, undirected graph $G$ with vertices $V(G)=[n]$ and edges $E(G)$, and a vector $\weightOperator \in \RR^{ V(G) \sqcup E(G) }$ such that
	\[
	\valuationOperator(\setOfItems) = \sum_{i \in S} \weightOperator_i + \sum_{\substack{ij \in E(G) \\ i,j\in S}} \weightOperator_{ij}.
	\]
	for every $S\subseteq [n]$.
	The graph $G$ is called the \emph{underlying value graph} of the valuation. Similarly, a pricing function is called graphical if there exists a pricing vector $p$ such that the price of a bundle $S$ is given by 
	$
	\sum_{i \in S} \price_i + \sum_{\substack{ij \in E(G) \\ i,j\in S}} \price_{ij}.
	$
\end{definition}

We will always consider \emph{anonymous} pricing, i.e. each agent will be offered the same price for each bundle. We now present one of the main results of our paper:

\begin{theorem}\label{thm:main}
	If $\mathcal{V}$ is the set of graphical valuations with the complete graph $K_n$ as the underlying graph, then a
	competitive equilibrium with anonymous graphical pricing is guaranteed
	to exist at \emph{any} bundle $a^\ast \in \{0,1\}^n$ for
	\emph{any} auction with valuations in $\mathcal{V}$. 
\end{theorem}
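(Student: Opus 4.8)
The plan is to linearize the problem by lifting to the correlation polytope $P(K_n)\subset\RR^d$, where $d=n+\binom{n}{2}$ and whose vertices are exactly the lifted indicator vectors $\lift(x)=(x,(x_ix_j)_{i<j})$ for $x\in\{0,1\}^n$, with $\pi$ the projection onto the first $n$ coordinates. Under this lift every graphical valuation $v^b$ becomes the linear functional $a\mapsto\product{\weight[b]}{a}$ on $P(K_n)$, and likewise a graphical price becomes a linear functional. Consequently the demand set $\demand{v^b}{p}$ is precisely the set of vertices of $P(K_n)$ lying on the face exposed by the direction $\weight[b]-p$; call this face $F^b$. Competitive equilibrium at $\bundleStar\in\{0,1\}^n$ then amounts to finding a single price vector $p\in\RR^d$ together with vertices $\lift(x^b)\in F^b$ satisfying $\sum_b x^b=\bundleStar$; equivalently, a lifted point $\point=\sum_b\lift(x^b)\in\piInverse(\bundleStar)$ that decomposes into one demanded vertex per agent.

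The observation driving the proof is that the market only needs to clear in the $n$ item coordinates: the $\binom{n}{2}$ edge coordinates of $\point$ are unconstrained, and the edge prices $p_{ij}$ are free parameters of the seller. I would encode the aggregate welfare as a height function on the lattice points of $mP(K_n)$ and pass to the induced regular (mixed) subdivision $\subd(\Valuation)$. Regularity is what makes the prices appear for free: each maximal cell $\sigma$ of $\subd(\Valuation)$ is exactly the aggregate demand face $\sum_b F^b$ for a suitable price $p_\sigma$, so any integral decomposition of a point of $\sigma$ into vertices $\lift(x^b)\in F^b$ is automatically supported by the anonymous graphical price $p_\sigma$. Thus existence of a competitive equilibrium at $\bundleStar$ reduces to the purely geometric statement that the fiber $\piInverse(\bundleStar)$ meets some cell of $\subd(\Valuation)$ in a point of the decomposable set $\decomposable$.

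To establish this I would first fix a welfare-maximizing integral allocation $(\hat x^b)$ with $\sum_b\hat x^b=\bundleStar$, whose lifted sum $\hat\point=\sum_b\lift(\hat x^b)$ lies in $\piInverse(\bundleStar)\cap\decomposable$ and in some cell $\sigma$ of $\subd(\Valuation)$. By construction $\hat\point$ already decomposes into vertices, so the only thing to verify is that these vertices can be realized as the demanded vertices of a common price, i.e. that $\hat\point$ can be placed in the relative interior of $\sigma$ after adjusting the free edge coordinates. Here completeness of $K_n$ enters: because $G=K_n$ the lift records all $\binom{n}{2}$ pairwise products, so the edge coordinates of $\piInverse(\bundleStar)$ range over a full $\binom{n}{2}$-dimensional space and there are exactly enough price parameters $p_{ij}$ to separate $\{\lift(\hat x^b)\}$ from every competing vertex of $P(K_n)$. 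Concretely, supportability fails only if the Farkas alternative holds: nonnegative weights $\lambda^b_x$ with $\sum_{b,x}\lambda^b_x(\lift(x)-\lift(\hat x^b))=0$ in all $d$ coordinates and $\sum_{b,x}\lambda^b_x\product{\weight[b]}{\lift(x)-\lift(\hat x^b)}>0$; such a certificate is a fractional reallocation that preserves every item total \emph{and} every pairwise-product total while strictly increasing welfare.

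The main obstacle is the final rounding step: turning this fully balanced fractional improving reallocation into an integral one, which would contradict optimality of $(\hat x^b)$ and thereby prove supportability. This is precisely an integer-decomposition-type property of the complete-graph correlation polytope, and it is where I expect the real work---and the role of $K_n$ rather than a general value graph $G$---to be concentrated. The tightness counterexamples promised in the introduction strongly suggest that for incomplete $G$ the missing edge coordinates shrink the available price space enough that exactly this rounding can genuinely fail.
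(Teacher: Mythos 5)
Your reduction is exactly the paper's: lift to $P(K_n)$, view graphical valuations and prices as linear functionals, identify demand sets with vertex sets of exposed faces, and reduce existence of a supporting price to the purely geometric condition that a lattice point of a Minkowski sum $\sum_{b} F^b$ of faces of $P(K_n)$ decomposes as a sum of vertices, one drawn from each $F^b$ (this equivalence is \Cref{prop:key-long}). The problem is that the step you explicitly defer --- ``turning this fully balanced fractional improving reallocation into an integral one'' --- is not a final technicality but the \emph{entire} content of the theorem, and nothing in your argument proves it. In the paper this step is \Cref{prop:decomp:const}, proved via \Cref{lem:decomposition_into_graphs} and \Cref{lem:perfect:matching}: first, Padberg's inequalities for the relaxation of $mP(K_n)$ (in particular $x_i + x_{jk} - x_{ij} - x_{ik} \geq 0$) force every lattice point of $mP(K_n)$ lying over $a^* \in \{0,1\}^n$ to be the characteristic vector of a disjoint union of cliques; second, a coordinate-by-coordinate analysis shows that \emph{any} convex decomposition of such a point into vertices of faces $F^1,\dots,F^m$ can use only those clique indicators (plus $\mathbf{0}$), with prescribed total multiplicities; third, a max-flow/min-cut argument converts the resulting fractional assignment of clique indicators to agents into an integral matching with $\hat{\chi}^b \in F^b$ for all $b$. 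Without some version of these three steps, your Farkas certificate cannot be refuted, and the proof is incomplete. (Note also that even granting an ``integral'' certificate, extracting from it a genuine one-bundle-per-agent allocation that contradicts welfare-maximality requires exactly this kind of matching argument.)

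A further caution: your heuristic for \emph{why} completeness of $K_n$ should make the rounding work --- that the edge coordinates provide ``exactly enough price parameters'' to separate the chosen vertices from all competitors --- is unsound as stated. The point \eqref{eqn:bad.bundle} in $4P(K_4)$ has the same $\binom{n}{2}$-dimensional space of edge prices available, yet the paper shows it lies in a Minkowski sum of four edges of $P(K_4)$ while admitting no decomposition into four lattice points at all, so supportability genuinely fails there for suitable valuations. What rescues bundles $a^* \in \{0,1\}^n$ is not a dimension count but the combinatorial rigidity of the fiber: over a $0/1$ bundle every lattice point of $mP(K_n)$ is a disjoint union of cliques, and inequality (iv) above --- which requires all three of $ij$, $ik$, $jk$ to be coordinates, i.e.\ requires $G = K_n$ --- is what pins down the possible decompositions. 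Finally, fixing a welfare-maximizing allocation buys you nothing here: \Cref{th:const_size} shows that \emph{every} point of $\pi^{-1}(a^*)$ is supportable, and welfare-maximality plays no role in closing the gap.
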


Our theorem is tight, in that one cannot replace CE in the statement with PE, or replace graphical pricing with linear pricing. In particular, the Cutlery Auction is an instance of an auction with complete graphical valuations that does \emph{not} have a Walrasian equilibrium nor a pricing equilibrium with graphical pricing, but it does have a CE  with graphical pricing (cf. \Cref{example:triangle}). In other words, for complete graphical valuations, graphical pricing is \emph{truly necessary} to achieve competitive equilibrium, but not necessarily pricing equilibrium. 

We present two generalizations of this statement in \Cref{sec:results}. In \Cref{thm:main-general}, we allow the agents' graphs to be complete subgraphs $K_S$ where $S \subsetneq [n]$. \Cref{th:CE}  allows the auctioneer to sell more than one item per type.

Our proof is constructive, and not only so, we give two \emph{different} constructions of pricing functions that support the competitive equilibrium. This yields a practical algorithm for computing a quadratic CE price (cf. \Cref{alg:ce}). \Cref{thm:main} and the algorithm thus makes a strong case for implementing graphical valuations for combinatorial auctions.

We stress that \Cref{thm:main} is not obtained directly from the Unimodularity Theorem. 
Different proofs of the Unimodularity Theorem  \cite{danilov2004discrete,Baldwin_Klemperer_Understanding_Preferences,Tran_Yu_Product-Mix_Auctions} all trace back to the total unimodularity of a certain integer program. In contrast, our proof relies on the geometry of the \emph{correlation polytope}. \\ 
\\

\textbf{Paper organization.} Our paper is organized as follows. In \Cref{sec:economics,subsec:conv-geom} we collect necessary definitions from multi-unit combinatorial auctions and discrete convex geometry. We formulate the problems in \Cref{sec:ce.and.geometry} using the language of regular subdivisions and lattice polytopes. The main results are presented and discussed in \Cref{sec:results}, and proved in \Cref{sec:proofs}. \\
\\

\textbf{Notation.} 
Throughout this paper, except for examples, we fix an integer $n$ and denote $[n] = \mset{1,\dots,n}$. Let $G$ be a graph of $[n]$ vertices. For a subgraph $H \subseteq G$, let $V(H) \subseteq [n]$ denote its vertex set and $E(H) \subseteq \binom{[n]}{2}$ its edge set.  Let $d := n + |E(G)|$. The characteristic vector of $H$ is $\chi_H \in \{0,1\}^d$, where the first $n$ coordinates $((\chi_H)_i, i \in [n])$ are indicators for the vertices of $H$, while the next $\vert E(G)\vert $ coordinates $((\chi_H)_{ij}, ij \in E(G))$ are indicators for the edges of $H$. In general, a vector $\bundle\in\RR^d$ inherits the same indexing system, that is, the first $n$ coordinates are indexed by $ i \in [n]$, while the next $\vert E(G)\vert $ coordinates are indexed by $ij \in E(G)$. The projection of $\bundle \in \RR^d$ onto its first $n$ coordinates is denoted $\pi(a) = \bundleStar \in \RR^n$. 
\\

\section{Discrete convex geometry and economic equilibria}\label{sec:background}

The product-mix auction with linear pricing forms a fundamental connection between discrete convex geometry and economic equilibria. We recommend \cite{Baldwin_Klemperer_Understanding_Preferences} as an introduction for economists and \cite{Tran_Yu_Product-Mix_Auctions} for geometers. 
In this section, we extend this bridge to the case of product-mix auctions with nonlinear pricings.

\subsection{The economics setup}\label{sec:economics}

Consider an auction with $n$ types of
indivisible goods on sale to $m$ agents, where $\bundleStar_i \in \N$ is
the number of goods of type $i\in[n]$. 
Throughout this paper, we assume that each agent wants to buy at most one item of each type, even though the auctioneer might sell more than one item of a this type to the group of agents.

A graphical pricing function $\price: 2^{[n]}
\to \RR$ specifies the price to be paid for each bundle. Each
agent $b \in [m]$ has a graphical valuation function $\valuation: 2^{[n]} \to \RR$, where $\valuation(\setOfItems) \in \RR$ measures how much
the agent values the bundle $\setOfItems$. 

Following \cite{candogan_parrilo_pricing_equilibria}, we model the relations between the goods via a graph $G$ on $n$ vertices. Each vertex represents a type of goods, while edges between vertices model the existence of a relationship between types, such as complementarity and substitutability.
Given a subset of types $S\subseteq [n]$, we construct a vector $\bundle_S\in \{0,1\}^{[n] \sqcup E(G)}$, where the first $n$ coordinates are indexed by the vertices of $G$ and the following coordinates are indexed by the edges of $G$. Explicitly, we define $\bundle_S$ 
by
\[
a_{S,i} = \begin{cases}
	1 & \text{ if } i \in S \\
	0 & \text{otherwise}
\end{cases}, \qquad
a_{S,ij} = \begin{cases}
	1 & \text{ if } i,j \in S \text{ and } ij \in E(G) \\
	0 & \text{otherwise}
\end{cases}.
\]
We can then rewrite the valuation from \Cref{defn:graphical} as $\valuationOperator: \RR^{[n]+E(G)} \to \RR,$ $\valuationOperator(\bundle)~=~\inner{\weightOperator,\bundle}
$
and a price function as $ \inner{\price,\bundle}$.

Such a valuation function can be interpreted as follows:
If $\weightOperator_{ij} > 0$ for some edge $ij\in E(G)$, then agent $b$ views
items of types $i$ and $j$ as \emph{complementary}, i.e. is rather
interested in buying both items together than only a single one of
them. The higher the value of $\weightOperator_{ij}$, the higher is agent
$b$'s preference of buying both $i$ and $j$ at the same time.
Similarly, if $\weightOperator_{ij} < 0$, then items of types $i$ and $j$ are
viewed as \emph{substitutable}, i.e. agent $b$ prefers to buy only one
of the two items. 
Let $\pi = \pi_G$ denote the coordinate projection
\[
\pi: \ZZ^{[n]\sqcup E(G)} \to \ZZ^{[n]}
\]
that forgets the coordinates which correspond to edges in $G$.  \\

The auction works as follows. First, the agents submit their
valuations to the auctioneer, then the auctioneer announces the price $\price$ and an
allocation $\bundle^1,\dots,\bundle^m \in \{0,1\}^d$, where agent $b$ is assigned the bundle $\bundle^b$ and $\bundle = \sum_{b=1}^m \bundle^b \in\piInverse(\bundleStar)$. If all of the agents and the seller agree, then
agent $b$ gets bundle $\bundle^b$ and pays $\inner{\price,\bundle^b}$ to the seller. If one
of them does not agree, then in theory, `equilibrium failed' (and
perhaps the auctioneer should be fired). In practice, of course, many
things can happen. The bare-minimum goal for the auctioneer is to compute an allocation-pricing pair such that all of the market participants can agree to, that is, a non-trivial economic equilibrium is reached. 
It is assumed that agent $b$ is satisfied with the allocation-price
pair $((\bundle^1,\dots,\bundle^m),\price)$ when they are allocated a bundle that maximizes
their utility at this price. At a given price $\price$, the set of such
bundles for agent $b$ is their \emph{demand set}
\[ \demand{\valuation}{\price}  = \argmax_{S \subseteq [n]} \mset{
	\valuation(\bundle_S) - \inner{ \price,\bundle_S} }, \]
where $a_S \in \{0,1\}^{[n]\sqcup E(G)}$ is the vector corresponding to the set $S$.
Thus, agent $b$ is satisfied with the allocation-price pair when $\bundle^b \in
\demand{\valuation}{\price}$. When all agents are satisfied, we have a
\emph{competitive equilibrium}.

\begin{definition}[Competitive equilibrium]\label{def:ce}
	Let $(\valuation: b \in [m])$ be graphical valuations with a fixed value graph $G$. 
	We say that the corresponding auction has a \emph{competitive
		equilibrium at the allocation-price pair} $\left((\bundle^1,\dots,\bundle^m),\price \right)$ if  $\bundle^b \in \demand{\valuation}{\price}$ for all $b \in [m]$. The auction has a \emph{competitive
		equilibrium} at $a \in \ZZ^{[n]\sqcup E(G) }$ and a price $p\in\RR^{[n]+E(G)}$ if there exists an allocation $(a^1,\dots,a^m)$  such that  $\bundle = \sum \bundle^b $ and $\bundle^b \in \demand{\valuation}{\price}$ for all $b \in [m]$. \\
	We say that the auction has a \emph{competitive equilibrium (CE) at a bundle $\bundleStar \in \N^n$ of goods} if there exists some  $ \bundle \in\piInverse(\bundleStar)$ and some price $p\in\RR^{[n]+E(G)}$ at which the auction has a competitive equilibrium.
\end{definition}

In this setup, we only consider allocations that are \emph{complete}, that is, the seller must sell all items in the bundle. Note that the seller's revenue only depends on the sum of the bundles in the allocation, as $ \sum_{b=1}^m \inner{\price, \bundle^b } = \inner{ \price, \sum_{b=1}^m \bundle^b } = \inner{\price, \bundle } $.
Informally, a competitive equilibrium says that the agents are always satisfied.
A stronger concept is pricing equilibrium, where the seller is another market participant who also needs to be satisfied. A revenue-maximization seller wants to maximize revenue at a given price $\price$, that is,
\[ \demand{\text{seller}}{\price}  = \argmax_{\bundle \in \piInverse(\bundleStar)} \mset{  \inner{\price,\bundle}}. \]	
This leads to the definition of a pricing equilibrium {\cite[Def. 2.3]{candogan_parrilo_pricing_equilibria}}. 

\begin{definition}[Pricing equilibrium]
	Let $(\valuation: b \in [m])$ be graphical valuations. 
	We say that the corresponding auction has a \emph{pricing
		equilibrium (PE)} at the allocation-price pair $((\bundle^1,\dots,\bundle^m),\price)$ 
	if 
	$\bundle^b \in \demand{\valuation}{\price}$ for all $b \in [m]$  and in addition $\bundle = \sum_{b=1}^m \bundle^b \in \demand{\text{seller}}{\price}$.
\end{definition}

In between these two notions lies the \emph{optimal competitive equilibrium}: an allocation-price pair such that the seller's revenue is maximized among all $\bundle \in \piInverse(\bundleStar)$ at which a competitive equilibrium exists.
The Cutlery Auction is a simple auction that illustrates the difference between these concepts.
 
\begin{example}[An auction with CE but no PE for graphical pricing]\label{example:triangle}
	We consider the Cutlery Auction due to \cite[Example 3.2]{candogan_parrilo_trees}. 
	Let $G=K_3$ be the complete graph on three vertices $V(K_3)=\{A,B,C\}$, $m=3$ and $\bundleStar = (1,1,1)$. The agent's valuations are given by the weight vectors
	\[ \weight[1] = (0,0,0,1,0,0)^T, \quad \weight[2] = (0,0,0,0,1,0)^T, \quad \weight[3] = (0,0,0,0,0,1)^T, \]
	i.e. the first agent has weight one for edge $AB$, the second agent has weight one for edge $AC$, the third agent has weight one for edge $BC$ and all remaining weights are zero.
	If we pick the graphical price vector
	$\price = (0,0,0,1,1,1)^T,$
	then for each $j\in\{1,2,3\}$ we have
	\[
	\demand{\valuation}{\price}\supseteq\left\{
	\left(\begin{smallmatrix}  0\\0\\0\\0\\0\\0 \end{smallmatrix}\right),
	\left(\begin{smallmatrix}  1\\0\\0\\0\\0\\0 \end{smallmatrix}\right),
	\left(\begin{smallmatrix}  0\\1\\0\\0\\0\\0 \end{smallmatrix}\right),
	\left(\begin{smallmatrix}  0\\0\\1\\0\\0\\0 \end{smallmatrix}\right)
	\right\}.
	\]
	Thus, we can decompose $\bundle = (1,1,1,0,0,0) \in\piInverse(\bundleStar)$  by assigning one item to each agent, e.g.
	\[ \bundle^1 = (1,0,0,0,0,0)^T, \quad \bundle^2= (0,1,0,0,0,0)^T, \quad \bundle^3 = (0,0,1,0,0,0)^T, \]
	(where each agent is charged the price $0$) in order to achieve a competitive equilibrium. However, the seller's revenue at this price is $\product{\price}{\bundle^1+ \bundle^2 + \bundle^3}=0$. 
	
	If we instead decide to sell items $A$ and $B$ to agent $1$ and item $C$ to agent $2$, then this constitutes a competitive equilibrium in which the sellers revenue is $\product{\price}{(1,1,0,1,0,0)^T + (0,0,1,0,0,0)^T}=1$. In fact this is the maximum revenue that the seller can achieve under all allocations that constitute a competitive equilibrium, and thus induces an optimal CE. However, this does not constitute a pricing equilibrium, since assigning all items to one agent would increase the seller's profit to $3$ (but none of the agents would be happy with this outcome).
	It was shown in \cite[Example 3.13]{candogan_parrilo_pricing_equilibria}, that neither a pricing equilibrium nor a Walrasian equilibrium for this example exists. 
	We will continue with this in \Cref{ex:triangle-cont}. 
\end{example}

\subsection{Convex geometry setup}\label{subsec:conv-geom}
We now describe a strong connection between CE and discrete convex
geometry for graphical valuations and pricing. 

\begin{definition}
	The \emph{polytope of the graph $G$} is
	\[ P(G) = \conv\left(\mset{\bundle_{S}\in\RR^{[n] \sqcup E(G) } \given S \subseteq [n]}\right), \]
	where $\bundle_S$ is the characteristic vector of the set $S$ as described in \Cref{sec:economics}. \\
	Let $d~=~\vert[n]~\sqcup~E(G)\vert$ denote the dimension of $P(G)$. 
	By construction, $P(G)$ is a 0/1--polytope, and therefore
	\[ P(G)\cap\ZZ^d = \vertices(P(G)) = \mset{\bundle_{S}\in\RR^{d } \given S \subseteq [n]}, \]
	where $\vertices(P(G))$ denotes the set of vertices of $P(G)$.
\end{definition}

Let $\lift(P)  = \mset{\sma a \\ \valuationOperator(a) \strix \mid a \in P(G) \cap \ZZ^d} \subseteq \RR^{d+1}$.
We consider the regular subdivision of $P(G)$ that is induced by taking the valuation $\valuationOperator$ as height function, i.e. the projection of the upper convex hull of the lifted polytope. For a detailed description of regular subdivisions, mixed subdivisions and the Cayley trick, we refer the reader to \cite[Section 9.2]{loera_triangulations}.\\

\begin{figure}[h]
	\centering
	\includegraphics[page=5,scale = 0.6]{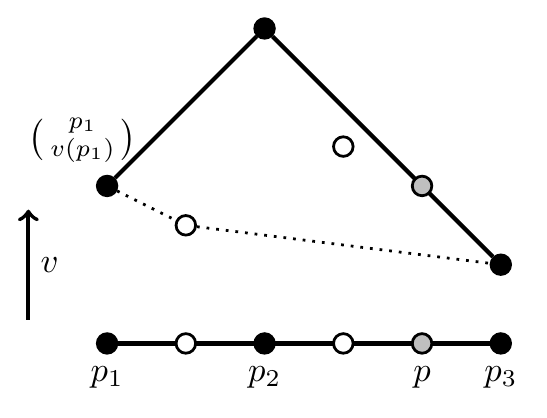} \quad
	\includegraphics[page=6,scale = 0.6]{figures}		
	\caption{The polytope $P(K_2)$ (left) and three possible subdivisions of $[0,1]^2$ (right).}
	\label{fig:pk2}
\end{figure}

\begin{example}[Geometry of graphical pricing versus linear pricing]
	
	Consider the case $n=2$. Any valuation $v: \{0,1\}^2 \to \mathbb{R}$ is linearly equivalent to a graphical valuation on $K_2$. Indeed, by the linear transformation $v \mapsto v - v((0,0))$, we can assume that $v((0,0)) = 0$. Then, we can set $w_1 = v((1,0))$, $w_2 = v((0,1))$, and $w_{12} = v((1,1)) - (w_1+w_2)$, so this valuation $v$ is indeed a graphical valuation on $K_2$. In this case, the correlation polytope $P(K_2)$ is a simplex in $\mathbb{R}^3$ (cf. Figure \ref{fig:pk2}).

	Consider an auction for $n = 2$ with two agents. When one works with linear pricing, competitive equilibrium concerns the regular subdivision $\Delta_v$ of $[0,1]^2$. There are three possible subdivisions of $[0,1]^2$ induced by $v$ (cf. Figure \ref{fig:pk2}). In economics, valuations that induce these subdivisions are called linear, complements, and substitutes, respectively. Thus, if $v^1$ is substitutes and $v^2$ is complements, then the set of edges of $\Delta_{v^1}$ and $\Delta_{v^2}$ together do not form a unimodular set: in particular, the vectors $(1,-1)$ and $(1,1)$ are not unimodular, as the determinant of the corresponding $2 \times 2$ matrix is bigger than 1. Therefore, by the Unimodularity Theorem \cite{danilov2004discrete,Baldwin_Klemperer_Understanding_Preferences,Tran_Yu_Product-Mix_Auctions}, competitive equilibrium \emph{can} fail for this auction.
	In contrast, when we work with graphical pricing, competitive equilibrium concerns the regular subdivision $\Delta_v$ of the correlation polytope $P(K_2)$. In this case, \Cref{thm:main} implies that competitive equilibrium \emph{always exists} for graphical pricing, regardless of whether the agents' valuations are complements or substitutes. \\
\end{example}

Let $\widetilde{\valuationOperator}$ be the smallest concave function such that $\widetilde{\valuationOperator}(\bundle) \geq \valuationOperator(\bundle)$ for all $\bundle\in P(G)\cap \ZZ^d$. 
We call $\bundle \in \ZZ^d$ a \emph{lifted point} if $\widetilde{\valuationOperator}(\bundle) = \valuationOperator(\bundle)$, i.e. if $\sma \bundle \\ \valuationOperator(\bundle) \strix$ lies in the upper hull of $\conv(\lift(P))$, as depicted in \Cref{fig:regular_subdiv}. Note that, in particular, a vertex of the regular subdivision is always a lifted point. Further, if the valuation is a linear function, then the induced regular subdivision is trivial. 
Note that 
\[
\demand{\valuationOperator}{-\price} = 
\argmax_{\bundle\in P(G)\cap\ZZ^d} \mset{\valuationOperator(\bundle)+\inner{\bundle,p}} = 
\argmax_{\bundle\in P(G)\cap\ZZ^d} \mset{ \inner{ \sma \bundle \\\valuationOperator(\bundle) \strix, \sma \price \\ 1 \strix } }.
\]
The set $\demand{\valuationOperator}{-\price}$ thus consists of all lifted points of a face of $\lift(P)$ with normal vector $\sma \price \\ 1 \strix$. 
Since a vertex of such a face is always a lifted point, the set of faces of the regular subdivision is given by $\mset{\conv(\demand{\valuationOperator}{-\price})\given \price\in \RR^d}$. 
Hence, a point $\bundle \in \sum_{b=1}^m \demand{\valuation}{\price}$ is contained in a Minkowski sum of faces of $P(G)$. Sums of this form correspond to faces of a mixed regular subdivision of the dilated polytope $mP(G)$, which is defined by the lifting function
\begin{align*}
	\Valuation(\bundle) &= \max \left\{ \sum_{b=1}^m
	\valuation(\bundle^b) \given \sum_{b=1}^m \bundle^b = \bundle,
	\bundle^b \in P(G)\cap\ZZ^{d} \right\} \\
	&=
	\max \left\{ \sum_{b=1}^m
	\inner{\weight,\bundle^b} \given \sum_{b=1}^m \bundle^b = \bundle,
	\bundle^b \in P(G)\cap\ZZ^{d} \right\}.
\end{align*}

\begin{figure}[t]
	\centering
	\includegraphics[page=1]{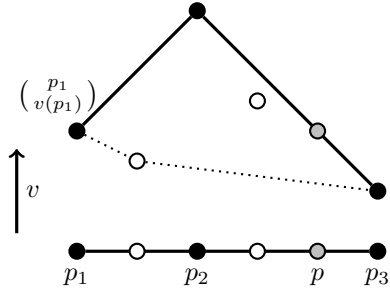}
	\caption{The black points $p_1, p_2$ and $p_3$ are vertices of
		the regular subdivision of the line segment, induced by the
		lifting function $v$. The white points are not lifted
		points. The point $p$ is a lifted point, but not a vertex of the
		regular subdivision: $p$ lies in the interior of the face with
		vertices $p_2$ and $p_3$.} \label{fig:1d.lift}
	\label{fig:regular_subdiv}
\end{figure}

\begin{lemma}\label{adding-const-vectors}
	Let $w\in\RR^d$. The mixed subdivision induced by
	$\Valuation(\bundle)$
	is equal to the subdivision induced by
	$$\Valuation_w(\bundle) = \max \left\{ \sum_{b=1}^m
	\inner{\weight+w,\bundle^b} \given \sum_{b=1}^m \bundle^b = \bundle,
	\bundle^b \in \vertices(P(G)) \right\}.$$ \\
	That is, adding a constant vector $w$ to all weight vectors
	$\weight$ does not change the mixed subdivision of $mP(G)$. 
\end{lemma}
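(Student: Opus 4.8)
The plan is to reduce the statement to the standard fact that a regular subdivision is unaffected by adding an affine-linear function to its height function. The first step is to observe that passing from $\weight$ to $\weight+w$ changes the aggregate height function $\Valuation$ only by the linear term $\inner{w,\bundle}$. Indeed, for any admissible splitting $\sum_{b=1}^m \bundle^b = \bundle$ with $\bundle^b \in \vertices(P(G))$ one has
\[
\sum_{b=1}^m \inner{\weight+w,\bundle^b} = \sum_{b=1}^m \inner{\weight,\bundle^b} + \inner{w,\, \textstyle\sum_{b=1}^m \bundle^b} = \sum_{b=1}^m \inner{\weight,\bundle^b} + \inner{w,\bundle},
\]
where the crucial point is that the term $\inner{w,\bundle}$ depends only on the target point $\bundle$ and not on the chosen splitting. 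Taking the maximum over all admissible splittings therefore yields
\[
\Valuation_w(\bundle) = \Valuation(\bundle) + \inner{w,\bundle}.
\]

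It then remains to see that adding the linear function $\inner{w,\,\cdot\,}$ to the height function leaves the induced mixed subdivision of $mP(G)$ unchanged. Here I would invoke the characterization already established above, namely that the faces of the subdivision induced by $\Valuation$ are exactly the sets $\conv(\argmax_{\bundle}\mset{\Valuation(\bundle)+\inner{\bundle,\price}})$ as $\price$ ranges over $\RR^d$. Substituting $\Valuation_w = \Valuation + \inner{w,\,\cdot\,}$ gives
\[
\argmax_{\bundle}\mset{\Valuation_w(\bundle)+\inner{\bundle,\price}} = \argmax_{\bundle}\mset{\Valuation(\bundle)+\inner{\bundle,\price+w}},
\]
so every face of the subdivision induced by $\Valuation_w$ arising from the price $\price$ coincides with the face of the subdivision induced by $\Valuation$ arising from the price $\price+w$. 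Since $\price \mapsto \price+w$ is a bijection of $\RR^d$, the two collections of faces are identical, which is exactly the claimed equality of subdivisions.

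Geometrically, this second step is just the observation that the shear $(\bundle,t)\mapsto(\bundle,t+\inner{w,\bundle})$ of $\RR^{d+1}$ carries the upper hull of $\conv(\lift(P))$ associated to $\Valuation$ onto the one associated to $\Valuation_w$, preserving the face lattice and commuting with the projection to $\RR^d$. I do not expect a genuine obstacle here: the only point requiring care is the telescoping identity of the first step, where one must use that the constraint fixes $\sum_{b=1}^m \bundle^b = \bundle$ so that the extra contribution $\inner{w,\bundle}$ is truly constant across all splittings of $\bundle$. Once that is in place, the invariance of the subdivision follows routinely from the $\argmax$ description of its faces, and no total unimodularity or polytopal combinatorics beyond this shear argument is needed.
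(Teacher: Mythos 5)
Your proof is correct, but it takes a genuinely different route from the paper's. You work directly on $mP(G)$: you first establish the identity $\Valuation_w = \Valuation + \inner{w,\cdot}$ (the telescoping step, which is exactly right, since the constraint $\sum_{b=1}^m \bundle^b = \bundle$ makes the extra term independent of the chosen splitting), and then conclude via the price-shift bijection $\price \mapsto \price + w$ applied to the $\argmax$ description of the faces. The paper instead passes to the Cayley polytope $\mathcal{C}(P(G),\dots,P(G))$, observes that adding $w$ to every $\weight$ perturbs the Cayley lifting function $\widetilde{\Valuation}$ by the linear functional $\inner{(w,\mathbf 0),\cdot}$, and invokes the Cayley trick to transfer invariance of the regular subdivision of the Cayley polytope down to the mixed subdivision of $mP(G)$. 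Both arguments rest on the same core fact (regular subdivisions are unchanged by adding an affine function to the heights), but they apply it to different polytopes, and that difference matters for one point your write-up glosses over: a mixed subdivision carries more data than its underlying polyhedral subdivision of $mP(G)$, namely the representation of each cell as a Minkowski sum $F^1 + \dots + F^m$ of faces of $P(G)$, and two distinct mixed subdivisions can have the same underlying subdivision. Your bijection argument, applied only to the aggregate $\Valuation$, literally establishes equality of the underlying subdivisions. The fix is immediate with your own method: apply the same substitution agent by agent, i.e.\ $\argmax_{\bundle \in P(G)}\mset{\inner{\weight + w,\bundle} + \inner{\bundle,\price}} = \argmax_{\bundle \in P(G)}\mset{\inner{\weight,\bundle} + \inner{\bundle,\price + w}}$, so each cell's decomposition into agents' demand sets is also carried over under $\price \mapsto \price + w$. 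The paper's Cayley-trick route avoids this subtlety automatically, because the regular subdivision of the Cayley polytope encodes the decomposition data; your route buys a more elementary and self-contained argument (no Cayley trick or external reference), at the cost of needing that one extra sentence.
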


The proof of this lemma can be found in \Cref{sec:proofs}. This lemma implies that, without loss of generality, we can assume that the weights $\weight$ are nonnegative. Note that despite the fact that this does not change the existence of a competitive equilibrium, it does affect the prices at which a competitive equilibrium can be achieved. 

\begin{example}\label{ex:triangle-cont}
	If we add the vector
	$(1,1,1,1,1,1)^T$ to all weights in \Cref{example:triangle}, then a
	pricing equilibrium can be achieved at the price $\price =
	(3,3,1,0,0,0)$ by selling all items to the first agent, in which case
	the seller's revenue is $7$. In accordance to
	\cite[Th. 3.7]{candogan_parrilo_pricing_equilibria} this price vector
	even constitutes a Walrasian equilibrium, as the underlying graph $G=K_3$
	is series-parallel.
\end{example}

\subsection{Competitive equilibrium  and convex geometry}\label{sec:ce.and.geometry}
We now establish the connection between the existence of a competitive equilibrium and properties of the polytope $P(G)$.
A key result is \Cref{prop:key-long}, which gives an if and only if condition for a CE to exist at a given bundle $\bundleStar$ when the agents valuations and the pricing rules are both graphical. Compared to the linear pricing case, each bundle
$\bundleStar$ generates a \emph{set} of points instead of a single
point, and a CE exists \emph{if and only if} \emph{one} of these points
is lifted in a certain regular mixed subdivision. 

\begin{proposition}[CE at an allocation for graphical valuations and pricings]\label{prop:key-long}
	Consider an auction with $m$ agents, anonymous graphical pricing and graphical valuations with a fixed underlying value graph $G$ on $n$ vertices, and $d=n+\vert E(G \vert$.  Let $\bundleStar \in \N^n$ be a fixed bundle and $a\in \piInverse(\bundleStar)$. Then the following are equivalent: 
	\begin{enumerate}[(i)]
		\item For any set of valuations $\{\valuation \mid b \in [m]\}$ there exists an allocation $ (\bundle^1,\dots,\bundle^m)$ and a price $\price\in\RR^d$ such that $a = \sum_{b=1}^m \bundle^b$ and at which a competitive equilibrium exists. 
		\item	For any set of valuations $\{\valuation \mid b \in [m]\}$ there exists a price $\price\in\RR^d$ such that $a \in \sum_{b=1}^m \demand{\valuation}{\price}$. 
		\item\label{item:key-long} For any faces $F^1, \dots,F^m$ of $P(G)$ holds: If $\bundle\in\sum_{b=1}^{m} F^b$ then $\bundle \in \sum_{b=1}^{m} \vertices(F^b)$. \\
	\end{enumerate}
\end{proposition}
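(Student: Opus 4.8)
The plan is to dispatch $(i)\Leftrightarrow(ii)$ by unwinding the definitions, and then to prove the substantive equivalence $(ii)\Leftrightarrow(iii)$ through the geometry of the mixed subdivision $\subd(\Valuation)$ of $mP(G)$. For $(i)\Leftrightarrow(ii)$, recall from \Cref{def:ce} that a competitive equilibrium at an allocation-price pair $((\bundle^1,\dots,\bundle^m),\price)$ means precisely that $\bundle^b\in\demand{\valuation}{\price}$ for every $b$; hence ``there is an allocation with $\bundle=\sum_b\bundle^b$ supporting a CE at price $\price$'' is, by the definition of Minkowski sum, the same as ``$\bundle\in\sum_b\demand{\valuation}{\price}$'', and since both statements carry the same universal quantifier over valuations, $(i)$ and $(ii)$ are verbatim restatements. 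The bridge to $(iii)$ is that a graphical valuation $\valuation(\bundle)=\inner{\weight,\bundle}$ is the restriction of a \emph{linear} functional to $\vertices(P(G))$, so $\demand{\valuation}{\price}=\vertices(F^b)$, where $F^b=\argmax_{x\in P(G)}\inner{\weight-\price,x}$ is the face of $P(G)$ exposed by $\weight-\price$ (the reformulation of the demand set recorded before \Cref{adding-const-vectors}). Conversely, every face of $P(G)$ is exposed by some linear functional, so any prescribed collection of faces arises as demand sets for a suitable choice of weight vectors. This recasts both $(ii)$ and $(iii)$ as statements about Minkowski sums of faces of $P(G)$ and their vertex sets.

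For $(iii)\Rightarrow(ii)$, fix valuations $\weight[1],\dots,\weight[m]$. The point $\bundle$ lies in some maximal cell of $\subd(\Valuation)$, and by the correspondence between cells of $\subd(\Valuation)$ and Minkowski sums of faces of $P(G)$ noted above, this cell equals $\sum_b F^b$ with $F^b=\conv(\demand{\valuation}{\price})$ for a suitable price $\price$. Since $\bundle\in\sum_b F^b$, hypothesis $(iii)$ upgrades this to $\bundle\in\sum_b\vertices(F^b)=\sum_b\demand{\valuation}{\price}$, which is exactly $(ii)$.

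For $(ii)\Rightarrow(iii)$, take arbitrary faces $F^1,\dots,F^m$ of $P(G)$ with $\bundle\in\sum_b F^b$ and choose weight vectors exposing them, so that $F^b=\argmax_{x}\inner{\weight,x}$ and $\sum_b F^b=\argmax_{\bundle}\Valuation(\bundle)$ is the welfare-maximizing cell of $\subd(\Valuation)$, on which $\widetilde{\Valuation}$ equals the constant $\max\Valuation=\sum_b\max_x\inner{\weight,x}$. Applying $(ii)$ to these valuations produces a price $\price$ with $\bundle\in\sum_b\demand{\valuation}{\price}$; as every point of this Minkowski sum of demand sets is a lifted point, $\Valuation(\bundle)=\widetilde{\Valuation}(\bundle)$. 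Because $\bundle$ lies in the welfare-maximizing cell, this forces $\Valuation(\bundle)=\max\Valuation$, so any optimal decomposition $\bundle=\sum_b\bundle^b$ with $\bundle^b\in\vertices(P(G))$ must satisfy $\inner{\weight,\bundle^b}=\max_x\inner{\weight,x}$, i.e. $\bundle^b\in\vertices(F^b)$ for every $b$. Hence $\bundle\in\sum_b\vertices(F^b)$, which is $(iii)$.

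I expect the main obstacle to be the step $(ii)\Rightarrow(iii)$: the hypothesis $(ii)$ only returns \emph{some} price $\price$, equivalently a cell direction that need not agree with the direction exposing the prescribed faces $F^b$, so one cannot read $\bundle\in\sum_b\vertices(F^b)$ directly off the demand decomposition that $(ii)$ provides. The way around this is to decouple the two pieces of information -- $(ii)$ certifies only that $\bundle$ is a \emph{lifted} point, while the choice of weights certifies that $\bundle$ sits in the welfare-maximizing cell -- and to observe that together they pin $\Valuation(\bundle)$ to the global maximum, after which welfare-maximality forces the decomposition into the prescribed faces. One should also record the standing hypothesis $\bundle\in mP(G)\cap\ZZ^d$ (a feasible integral aggregate bundle), which guarantees the cell used in $(iii)\Rightarrow(ii)$ exists and rules out the degenerate case in which $(iii)$ would hold vacuously while $(ii)$ fails.
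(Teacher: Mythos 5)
Your proof is correct, and its overall architecture is the same as the paper's: $(i)\Leftrightarrow(ii)$ is definitional, and $(ii)\Leftrightarrow(iii)$ runs through the correspondence between cells of the mixed subdivision of $mP(G)$ and Minkowski sums of faces $F^b=\conv(\demand{\valuation}{\price})$ of $P(G)$; your $(iii)\Rightarrow(ii)$ is precisely the paper's contrapositive $\neg(ii)\Rightarrow\neg(iii)$. Where you genuinely diverge is $(ii)\Rightarrow(iii)$, and your route is the more watertight one. The paper argues the contrapositive by first letting $\price\in\RR^d$ be arbitrary and \emph{then} defining the valuations $\valuation(\bundle)=\inner{\price-\price^b,\bundle}$, so that $\demand{\valuation}{\price}=\vertices(F^b)$; since the valuations there depend on the price being quantified over, what is literally shown is ``for every $\price$ there exist valuations with $\bundle\notin\sum_b\demand{\valuation}{\price}$,'' whereas $\neg(ii)$ requires one valuation set failing for \emph{all} prices. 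Your argument fixes the exposing weights $\weight=-\price^b$ once and for all, and then absorbs the arbitrary price that $(ii)$ returns via the welfare step: being in $\sum_b\demand{\valuation}{\price}$ makes $\bundle$ a lifted point, membership in the welfare-maximizing cell $\sum_b F^b$ pins $\Valuation(\bundle)=\max\Valuation$, and termwise optimality of any maximizing decomposition then forces each summand into $\vertices(F^b)$. This is exactly the quantifier decoupling the paper elides, so your flagged ``main obstacle'' is a real one and your resolution of it is sound. One further point in your favor: your added standing hypothesis $\bundle\in mP(G)\cap\ZZ^d$ is genuinely needed. The paper instead asserts $\piInverse(\bundleStar)\subseteq mP(G)$, which is false as stated --- an integer point projecting to $\bundleStar$ need not lie in $mP(G)$, and for such a point $(iii)$ holds vacuously while $(ii)$ fails --- so without your hypothesis the equivalence is not literally true.
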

The proof of this proposition can be found in \Cref{sec:proofs}. 
Applying the definition of a CE at $\bundleStar$ immediately implies the following:

\begin{corollary}[CE at a bundle for graphical valuations and pricings]\label{cor:key-short}
	Consider an auction with $m$ agents, anonymous graphical pricing and graphical valuations with a fixed underlying value graph $G$ on $n$ vertices, and $d=n+\vert E(G)\vert $.  Let $\bundleStar \in \N^n$ be a fixed bundle. Then the following are equivalent: 
	\begin{enumerate}[(i)]
		\item There exists an $\bundle \in \piInverse(\bundleStar)$ such that for any set of valuations $\{\valuation \mid b \in [m]\}$ there exists an allocation $ (\bundle^1,\dots,\bundle^m)$ and a price $\price\in\RR^d$ such that $a = \sum_{b=1}^m \bundle^b$ and at which a competitive equilibrium exists. 
		\item	There exists an $\bundle \in \piInverse(\bundleStar)$ such that for any set of valuations $\{\valuation \mid b \in [m]\}$ there exists a price $\price\in\RR^d$ such that $a \in \sum_{b=1}^m \demand{\valuation}{\price}$. 
		\item There exists an $\bundle \in \piInverse(\bundleStar)$ such that for any faces $F^1, \dots,F^m$ of $P$ holds: If $\bundle\in\sum_{b=1}^{m} F^b$ then $\bundle \in \sum_{b=1}^{m} \vertices(F^b)$. 
	\end{enumerate}
	
	In particular, if conditions (i)--(iii) hold, then for any set of valuations a competitive equilibrium at $\bundleStar$ exists.
\end{corollary}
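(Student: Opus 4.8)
The plan is to obtain all three equivalences directly from \Cref{prop:key-long} by existentially quantifying over the fiber $\piInverse(\bundleStar)$. For each fixed $a\in\piInverse(\bundleStar)$, the proposition asserts that its three conditions are pairwise equivalent, and the corollary's conditions (i)--(iii) are exactly the existential closures ``$\exists\, a\in\piInverse(\bundleStar)$'' of these same three conditions. Since an equivalence $P(a)\Leftrightarrow Q(a)$ holding for every $a$ in a common index set yields $(\exists a\,P(a))\Leftrightarrow(\exists a\,Q(a))$, the three equivalences transfer with no further argument. I would write this out in one short paragraph, stressing that the quantifier ``$\exists a$'' ranges over the \emph{same} set $\piInverse(\bundleStar)$ in all three conditions, so that the closures line up term by term.

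The only step needing care is the final ``In particular'' claim, which I would settle by unfolding \Cref{def:ce}. By that definition, for a given family of valuations $\{\valuation\mid b\in[m]\}$ the auction has a CE at the bundle $\bundleStar$ precisely when there exist some $a\in\piInverse(\bundleStar)$ and some price $\price$ supporting a competitive equilibrium. Condition (i) of the corollary hands us a single $a\in\piInverse(\bundleStar)$ that works uniformly in the valuations: for \emph{any} valuations it yields an allocation $(\bundle^1,\dots,\bundle^m)$ with $\sum_b\bundle^b=a$ and a price $\price$ at which a CE holds. Specializing ``any'' to the given family produces exactly the $a$ and $\price$ required by \Cref{def:ce}, so a CE at $\bundleStar$ exists.

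I expect no genuine obstacle: all the mathematical content sits in \Cref{prop:key-long}, and the corollary is a purely logical repackaging of it. The single point to watch is the quantifier order. In condition (i) the existential ``$\exists a$'' sits \emph{outside} the universal ``for any set of valuations'', which is strictly stronger than the definitional requirement for CE at $\bundleStar$ (where the valuations are fixed first and $a$ may then depend on them). Since only the implication ``(i) $\Rightarrow$ CE at $\bundleStar$'' is claimed, this extra strength is harmless, and I would flag it in passing rather than chase the non-converse.
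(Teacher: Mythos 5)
Your proposal is correct and takes essentially the same route as the paper, which states the corollary as an immediate consequence of \Cref{prop:key-long} together with the definition of CE at a bundle: the three conditions are the existential closures over $\bundle\in\piInverse(\bundleStar)$ of the proposition's three equivalent conditions, and the ``in particular'' claim follows by specializing condition (i) to the given valuations. Your remark on the quantifier order (that (i) is strictly stronger than CE at $\bundleStar$ for each fixed set of valuations, and that only the forward implication is claimed) is a careful touch the paper leaves implicit, but it does not change the argument.
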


\begin{example}[IDP and the nonexistence of CE for $G=K_n$ at every lifted bundle]
	The previous \Cref{prop:key-long} (\ref{item:key-long}) indicates a strong connection to the integer decomposition property of a polytope.
	A $d$-dimensional lattice polytope $P\subset\RR^d$ is said to posses the \emph{Integer Decomposition Property (IDP)} if for every $m\in\N$ and for every lattice point $\bundle\in mP\cap\ZZ^d$ there exist $v_1,\dots, v_m \in P\cap \ZZ^d$ such that $\bundle = \sum_{j=1}^m v_j. $ 
	It is known that the correlation polytope $P(K_n)$ is not IDP for $n\geq 4$. In fact, for $n=4$, $m=4$, the point 
	\begin{equation}\label{eqn:bad.bundle}
		(\bundle_1, \bundle_2, \bundle_3, \bundle_4, \bundle_{12}, \bundle_{13}, \bundle_{14}, \bundle_{23}, \bundle_{24}, \bundle_{34}) = (2,2,2,2,1,1,1,1,1,1)
	\end{equation}
	is the sum of the midpoints of the four edges
	\begingroup
	\allowdisplaybreaks
	\begin{align*}
		\conv\left( (0,0,0,0, 0,0,0,0,0,0), 
		(0,0,0,1, 0,0,0,0,0,0)
		\right), \\	
		\conv\left( (0,1,1,0, 0,0,0,1,0,0), 
		(1,0,1,0, 0,1,0,0,0,0)
		\right), \\
		\conv\left( (0,1,1,1, 0,0,0,1,1,1),
		(1,0,1,1, 0,1,1,0,0,1)	
		\right), \\
		\conv\left( (1,1,0,0, 1,0,0,0,0,0),
		(1,1,0,1, 1,0,1,0,1,0)	
		\right),
	\end{align*}\endgroup
	but cannot be written as the sum of any $4$ lattice points of $P(K_4)$. 
	By \Cref{prop:key-long}, the failure of IDP also implies that there exists some set of valuations $\{v^b\}$ such that competitive equilibrium cannot be achieved at the specific lifted bundle $\bundle$ defined in \eqref{eqn:bad.bundle}. However, \Cref{th:CE} below guarantees that there exists a different lifted bundle $\bundle' \in P \cap \mathbb{Z}^d$ such that $\pi(\bundle) = \pi(\bundle')$, and that competitive equilibrium is achieved at $\bundle'$. 
	This example can be extended to a series of examples for any $n\geq 4$ and $m\geq 4$. 
\end{example}

\section{Main Results} \label{sec:results}
In this section we present and discuss our main results. The proofs follow in the next section.
\subsection{Everyone bids on everything}
Recall that $n$ is the number of distinct items, $m$ is the number of bidders and $\pi$ the projection of a vector with components indexed by $[n]\sqcup E(G)$. In this section, we consider the complete graph $G=K_n$, and vectors are thus of length $d = n+\binom{n}{2}$.
We begin with auctions in which the seller's bundle contains either $0$ or $\rr$ items of each type for a fixed $r\in\N$. Recall that we assume that each agent is only interested in buying at most one item per type.
An important special case is the combinatorial auction where $r=1$. We show that a competitive equilibrium in this scenario can always be achieved: 

\begin{theorem}\label{th:const_size}
	Let $\bundleStar\in \{0,\rr\}^n$ and $m\geq \rr$. If the underlying value graph of all agents is the complete graph, then $\piInverse(\bundleStar) \neq \emptyset$ and for any set of valuations and every $\bundle\in \piInverse(\bundleStar)$ there exists a price $p\in\RR^d$ at which a competitive equilibrium exists.
\end{theorem}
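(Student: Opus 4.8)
The plan is to deduce the statement from the combinatorial criterion in \Cref{prop:key-long}. Nonemptiness of $\piInverse(\bundleStar)$ is immediate: letting $\rr$ of the agents buy $\supp(\bundleStar)$ and the remaining $m-\rr\ge 0$ buy nothing realizes $\bundleStar$ as a sum of $m$ vertices. Since the valuations are arbitrary, it then suffices to fix a bundle $\bundle\in\piInverse(\bundleStar)$ and verify condition (iii) of \Cref{prop:key-long} for it, namely that for all faces $F^1,\dots,F^m$ of $P(K_n)$, if $\bundle\in\sum_{b=1}^{m} F^b$ then $\bundle\in\sum_{b=1}^{m}\vertices(F^b)$. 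A first reduction removes the empty coordinates: if $\bundleStar_i=0$ then $\bundle_i=0$, and since every $y^b\in F^b\subseteq P(K_n)$ has nonnegative coordinates, any $y^b$ with $\sum_b y^b=\bundle$ must satisfy $y^b_i=0$; hence every vertex appearing with positive weight in $y^b$ avoids $i$, and we may replace $F^b$ by its face $\{x_i=0\}\cap F^b$. Iterating over all $i\notin T:=\supp(\bundleStar)$ reduces us to the case $T=[n]$ and $\bundleStar=\rr\,\mathbf 1$, so that all vertex coordinates of $\bundle$ equal $\rr$.

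Next I would recast condition (iii) as a rounding problem. Writing $\bundle=\sum_b y^b$ with $y^b\in F^b$, each $y^b$ is a convex combination $\sum_S\lambda^b_S\,\chi_S$ supported on $\vertices(F^b)=\{\chi_S:S\in\mathcal S_b\}$, where $\mathcal S_b\subseteq 2^{[n]}$ is the family of maximizers of the functional defining $F^b$. The goal is to select one set $S_b\in\mathcal S_b$ per agent with $\sum_b\chi_{S_b}=\bundle$, i.e.\ so that each type lies in exactly $\rr$ of the $S_b$ and each pair $ij$ lies together in exactly $\bundle_{ij}$ of them. I would first dispatch the boundary case $m=\rr$: every vertex coordinate of $\bundle$ equals $\rr=m$ and each $y^b_i\in[0,1]$, forcing $y^b_i=1$ for all $b,i$; hence each $\mu_b$ is supported on $\{[n]\}$, so $y^b=\chi_{[n]}$ and $\bundle=\sum_b\chi_{[n]}$ already decomposes. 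This isolates the genuine difficulty in the regime $m>\rr$.

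The heart of the proof, and the step I expect to be hardest, is the integral selection for $m>\rr$. I would induct on $m$ with the invariant that $\bundle$ is a sum of $m$ vertices and $m\ge\max_i\bundle_i$ (initially $\max_i\bundle_i=\rr\le m$). The inductive step is a peeling lemma: given faces with $\bundle\in\sum_b F^b$, choose a vertex $\chi_{S_m}\in\vertices(F^m)$ such that the residual $\bundle-\chi_{S_m}$ lies in $\sum_{b<m}F^b$ and still satisfies the invariant. Preserving $m-1\ge\max_i(\bundle-\chi_{S_m})_i$ forces $S_m$ to contain every coordinate $i$ with $\bundle_i=m$; but $\bundle_i=m$ means all $m$ agents hold $i$, so every positively weighted set already contains such $i$, and the constraint is automatically compatible with $S_m\in\mathcal S_m$. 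The delicate point is to match the \emph{edge} coordinates simultaneously while keeping $\chi_{S_m}$ a vertex of the prescribed face; here I would round the fractional witness $\mu_m$ toward a single set and transport the displaced mass among the other agents, using $m\ge\max_i\bundle_i$ to guarantee that some agent is always free to absorb each unit without over-filling a coordinate. This is precisely the mechanism that fails for the non-decomposable bundle $(2,2,2,2,1,1,1,1,1,1)$ of $P(K_4)$: that point is not a sum of $m$ vertices and so lies outside the invariant from the start, which is why the hypotheses $\bundleStar\in\{0,\rr\}^n$ and $m\ge\rr$ are essential.

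If the direct peeling lemma proves too delicate, I would fall back on the Cayley-trick description of $\sum_b F^b$: lattice decomposability of $\bundle$ is equivalent to the corresponding cell of the mixed subdivision of $m\,P(K_n)$ being spanned by its lattice vertices, and I would try to show that over the slab $\piInverse(\rr\,\mathbf 1)$ these cells carry a fine enough lattice structure to force decomposability, exploiting that $\pi$ restricts to a bijection from $\vertices(P(K_n))$ onto $\{0,1\}^n$. Either way, once condition (iii) is verified for every $\bundle\in\piInverse(\bundleStar)$, \Cref{prop:key-long} yields an allocation and a price $\price\in\RR^d$ supporting a competitive equilibrium at each such $\bundle$, and the instance $\rr=1$ recovers \Cref{thm:main}.
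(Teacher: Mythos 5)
Your high-level framing is the same as the paper's: reduce \Cref{th:const_size} to condition (iii) of \Cref{prop:key-long} and prove the purely geometric decomposition statement (your nonemptiness argument, the reduction to $\bundleStar = \rr\mathbf 1$, and the base case $m=\rr$ are all fine, but these are the easy parts). The heart of your argument --- the ``peeling lemma'' for $m>\rr$ --- is never actually proved, and it is exactly where all the work lies. Peeling off one vertex $\chi_{S_m}\in\vertices(F^m)$ so that the residual stays in $\sum_{b<m}F^b$ is not a local rounding issue: you must simultaneously respect all edge coordinates \emph{and} membership in the prescribed faces, and ``transporting the displaced mass among the other agents'' is precisely the assignment problem the paper solves with real machinery. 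Concretely, the paper first shows (\Cref{lem:decomposition_into_graphs}) that a point of $mP(K_n)$ with all coordinates in $\{0,\rr\}$ is $\rr$ times the characteristic vector of a disjoint union of cliques; then, in \Cref{prop:decomp:const}, it introduces an additional valid inequality to force $s\cdot\rr\le m$, proves that the only vertices that can occur with positive weight in \emph{any} fractional representation $\bundle=\sum_b\sum_{q\in V(F^b)}\lambda_q q$ are exactly these clique vectors $\chi^t$ (and possibly $\mathbf 0$), and finally uses a max-flow/min-cut argument (\Cref{lem:perfect:matching}) to extract an integral assignment of clique vectors to faces. None of these steps has a counterpart in your sketch, and your fallback via the Cayley trick is equally unsubstantiated.

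There is also a genuine error in your setup, not merely a missing proof. After your reduction you constrain only the \emph{vertex} coordinates of $\bundle$ to equal $\rr$, leave the edge coordinates arbitrary, and aim to prove condition (iii) for all such points. That statement is false: the paper's own point \eqref{eqn:bad.bundle}, namely $(2,2,2,2,1,1,1,1,1,1)\in 4P(K_4)$, has all vertex coordinates equal to $\rr=2$, satisfies $m=4\ge\rr$, lies in a Minkowski sum of four faces (edges) of $P(K_4)$, yet is not a sum of four lattice points, so condition (iii) fails there. You mention this point but dismiss it as lying ``outside the invariant from the start''; that dismissal is circular, because the first clause of your invariant ($\bundle$ is a sum of $m$ vertices) is never established for the initial $\bundle$ --- and this example shows it cannot be, even though the example satisfies every hypothesis you are working under ($\bundleStar\in\{0,\rr\}^n$, $m\ge\rr$, all vertex coordinates equal to $\rr$). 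What actually excludes it in the paper is the stronger hypothesis of \Cref{prop:decomp:const} that \emph{all} coordinates of $\bundle$, including edge coordinates, lie in $\{0,\rr\}$ --- equivalently, by \Cref{lem:decomposition_into_graphs}, that $\bundle$ is $\rr$ times a disjoint union of cliques; this is the reading of $\piInverse(\bundleStar)$ under which the theorem is proved. To repair your argument you would need to adopt this restriction explicitly and then still supply the vertex-to-face assignment step, which is the content of the paper's flow argument.
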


The proof of this theorem gives an explicit construction of how to split the bundle in question. If $r=1$, then $\bundle\in\piInverse(\bundleStar)$ is the characteristic vector of a disjoint union of cliques and the procedure in the proof assigns cliques to agents. A choice of $\bundle\in
\piInverse(\bundleStar)$ corresponds to a choice of connected
components. This construction gives a lot of freedom to the auctioneer: The auctioneer can decide which items are being sold together and is still guaranteed to achieve a competitive equilibrium. The next example shows that even for $r=1$, the existence of a competitive equilibrium can fail when we do not consider the complete graph as value graph.

\begin{example}\label{ex:all-ones-is-not-trivial}
	\begin{figure}[b]
		\centering
		\includegraphics[page=2,scale = 1.25]{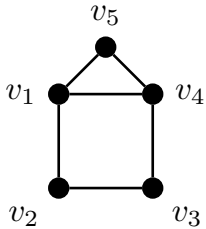}
		\caption{A value graph at which CE may fail at $\bundleStar = (1,1,1,1,1)$.}
		\label{fig:house}
	\end{figure}
	
	Let $G$ be the graph consisting of the cycle $v_1,v_2,v_3,v_4$ together with an additional vertex $v_5$ and edges $v_1v_5, v_4v_5$, as shown in \Cref{fig:house}. Consider the following $4$ edges of the polytope $P(G)$ \\
	\begin{align*}
		F^1 &= \text{conv} \left(
		\begin{pmatrix}
			0, 1, 0, 0, 0, 0, 0, 0, 0, 0, 0
		\end{pmatrix},
		\begin{pmatrix}
			1, 0, 1, 0, 0, 0, 0, 0, 0, 0, 0
		\end{pmatrix} \right) \\
		F^2 &= \text{conv} \left(
		\begin{pmatrix}
			0, 0, 1, 0, 0, 0, 0, 0, 0, 0, 0
		\end{pmatrix},
		\begin{pmatrix}
			0, 1, 0, 1, 0, 0, 0, 0, 0, 0, 0
		\end{pmatrix} \right) \\
		F^3 &= \text{conv} \left(
		\begin{pmatrix}
			0, 0, 0, 0, 1, 0, 0, 0, 0, 0, 0
		\end{pmatrix},
		\begin{pmatrix}
			1, 0, 0, 0, 0, 0, 0, 0, 0, 0, 0
		\end{pmatrix} \right) \\
		F^4 &= \text{conv} \left(
		\begin{pmatrix}
			0, 0, 0, 0, 1, 0, 0, 0, 0, 0, 0
		\end{pmatrix},
		\begin{pmatrix}
			0, 0, 0, 1, 0, 0, 0, 0, 0, 0, 0
		\end{pmatrix} \right) \\
	\end{align*}
	For $\bundleStar=(1,1,1,1,1)$, we have $\piInverse(\bundleStar)\cap\sum_{b=1}^4 F^b = \{(1,1,1,1,1,0,0,0,0,0)\}$ but \\
	$\piInverse(\bundleStar)\cap\sum_{b=1}^4 \vertices(F^b) = \emptyset$. Hence, the assumption of $G=K_n$ in \Cref{th:const_size} is truly necessary. 

\end{example}

Next, we loosen the assumption on $\bundleStar$ and allow
arbitrary $\bundleStar\in\ZZ^n\cap[0,m]^n$. We show that again a CE at
$\bundleStar$ always exists, however we only construct one explicit $\bundle
\in \piInverse(\bundleStar)$ at which a competitive equilibrium is guaranteed to exist.

\begin{theorem}\label{th:CE}
	Let $\bundleStar\in \mset{0,1,\dots,m}^n$. If the underlying graph of
	all valuations is the complete graph, then there exists an $a\in\piInverse(\bundleStar)$ such that for any set of valuations there exists a price $p\in\RR^d$ at which a competitive equilibrium exists.
\end{theorem}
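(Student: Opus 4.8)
The plan is to reduce to the purely combinatorial criterion \Cref{prop:key-long}(\ref{item:key-long}): it suffices to exhibit one point $a\in\piInverse(\bundleStar)$ such that for all faces $F^1,\dots,F^m$ of $P(K_n)$, $a\in\sum_b F^b$ implies $a\in\sum_b\vertices(F^b)$. Since $K_n$ is invariant under relabeling its vertices, I may assume $\bundleStar_1\ge\bundleStar_2\ge\dots\ge\bundleStar_n$, and I take the \emph{monotone completion}
\[
a_i=\bundleStar_i,\qquad a_{ij}=\min(\bundleStar_i,\bundleStar_j)=\bundleStar_{\max(i,j)}.
\]
This $a$ is realized by the nested cliques $S_k=\{i:\bundleStar_i\ge k\}$ for $k=1,\dots,m$: a direct count gives $a=\sum_{k=1}^m a_{S_k}$, so $a\in mP(K_n)\cap\ZZ^d$ and $\pi(a)=\bundleStar$, as required.

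The heart of the argument is a facet-tightness computation. Suppose $a=\sum_b x^b$ with $x^b\in F^b\subseteq P(K_n)$. Using the valid inequalities $x_{ij}\le\min(x_i,x_j)$ of $P(K_n)$ together with the elementary bound $\sum_b\min(x_i^b,x_j^b)\le\min(\sum_b x_i^b,\sum_b x_j^b)$, I obtain, for each edge $ij$,
\[
\min(a_i,a_j)=a_{ij}=\sum_b x_{ij}^b\le\sum_b\min(x_i^b,x_j^b)\le\min\Big(\textstyle\sum_b x_i^b,\sum_b x_j^b\Big)=\min(a_i,a_j).
\]
Hence every inequality is an equality. Term-by-term tightness of the first forces $x_{ij}^b=\min(x_i^b,x_j^b)$ for every $b$, and tightness of the second forces, for each edge $ij$, that one of the two coordinates dominates the other \emph{uniformly} over all $b$ (comonotonicity). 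These pairwise dominance relations are transitive and total, so there is a single permutation $\sigma$ with $x^b_{\sigma(1)}\ge\cdots\ge x^b_{\sigma(n)}$ for all $b$ simultaneously.

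Consequently every $x^b$ lies in the \emph{chain simplex} $\Sigma_\sigma$, the face of $P(K_n)$ obtained by turning all inequalities $x_{ij}\le x_j$ (with $i$ preceding $j$ in $\sigma$) into equalities; its vertices are exactly the $w_k=a_S$ for $S$ an initial segment of $\sigma$. Crucially $\Sigma_\sigma$ is a \emph{unimodular} simplex, since the consecutive vertex differences $e_{\sigma(k)}+\sum_{i<k}e_{\sigma(i)\sigma(k)}$ restrict to the identity on the vertex coordinates. Thus $a\in m\Sigma_\sigma\cap\ZZ^d$, and by unimodularity $a$ has integral barycentric coordinates: $a=\sum_k\mu_k w_k$ with $\mu_k\in\ZZ_{\ge0}$ and $\sum_k\mu_k=m$. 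Writing each $x^b=\sum_k\lambda^b_k w_k$ with $\supp(\lambda^b)=\vertices(G^b)$ for the minimal face $G^b\subseteq F^b\cap\Sigma_\sigma$ containing $x^b$, the array $(\lambda^b_k)$ is a fractional transportation plan with integral margins ($1$ per row $b$, $\mu_k$ per column $k$); by total unimodularity of the bipartite incidence matrix it rounds to an integral plan supported on $\supp(\lambda)$. This assigns to each $b$ a chain vertex $v^b\in\vertices(G^b)\subseteq\vertices(F^b)$ with $\sum_b v^b=a$, establishing $a\in\sum_b\vertices(F^b)$ and hence the theorem.

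I expect the main obstacle to be the facet-tightness step and its consequence that all pieces $x^b$ lie in a \emph{single} chain simplex; once this linear-ordering structure is extracted, unimodularity of $\Sigma_\sigma$ and the transportation rounding are routine. Extra care is needed with ties among the $\bundleStar_i$ (and among the coordinates of the $x^b$), which is precisely why I pass to the total preorder induced by the comonotone dominance relations instead of committing to the sorted order of $a$ in advance.
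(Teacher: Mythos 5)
Your proposal is correct, and its endpoints coincide with the paper's: you pick the same canonical point $a_i=\bundleStar_i$, $a_{ij}=\min(\bundleStar_i,\bundleStar_j)$ (the paper's \Cref{prop:CE_always_exists}, where it is written as a sum of nested level-set cliques $\chi^{t_l}$), you reduce through \Cref{prop:key-long}(iii), and your final transportation-polytope rounding is exactly the paper's \Cref{lem:perfect:matching}, which is proved there via max-flow--min-cut rather than total unimodularity of the bipartite incidence matrix. Where you genuinely diverge is the middle step. The paper fixes a representation $a=\sum_b\sum_{q\in V(F^b)}\lambda_q q$ and argues coordinate-by-coordinate (using the valid inequalities $x_{ij}\le x_i$, $x_{ij}\le x_j$ and the explicit values $t_1<\dots<t_s$ of $\bundleStar$) that every support vertex equals some $\chi^{t_l}$ or $\mathbf 0$, then verifies the margin identities needed for the flow lemma by hand. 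You instead get the chain structure in one stroke: tightness of $x_{ij}\le\min(x_i,x_j)$ under summation forces $x^b_{ij}=\min(x^b_i,x^b_j)$ together with comonotonicity of all summands, so every $x^b$ lies in a single chain-simplex face $\Sigma_\sigma$ of $P(K_n)$, and unimodularity of that simplex (read off from the vertex coordinates) delivers the integral margins $\mu_k$ immediately. Your route is shorter, more conceptual, and handles ties uniformly via the dominance preorder; the paper's route is more elementary and its explicit vertex identification is reused almost verbatim in \Cref{prop:decomp:const}, giving a uniform treatment of \Cref{th:const_size} and \Cref{th:CE}. One step you should write out: the claim that the barycentric support of $x^b$ in $\Sigma_\sigma$ consists of vertices of $F^b$ needs the observation that, since $\Sigma_\sigma$ is a face of $P(K_n)$, the minimal face of $\Sigma_\sigma$ containing $x^b$ coincides with the minimal face of $P(K_n)$ containing $x^b$, which is contained in $F^b$; with that sentence added, the argument is complete.
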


The procedure in which the bundle  can be split up to achieve a competitive equilibrium is as follows: \\
If $\bundleStar \in \{0,1\}^n$, then auctioneer sells the entire bundle to one agent, i.e. there is one agent who gets an item of each type $i\in[n]$ such that $\bundleStar_{i} = 1$. If $\bundleStar \in \{0,1,2\}^n$, then the auctioneer sells the items in two bundles: There is one agent who will be offered one item of each type where $\bundleStar_i > 0$. A second agent will be made an offer for all remaining items, i.e. all items such that $\bundleStar_i = 2$. And so on. \\

Even though this result might seem underwhelming in the case of combinatorial auctions, in which \Cref{th:const_size} gives the auctioneer much more flexibility, it still provides an answer regarding the theoretical question of existence of competitive equilibrium for arbitrary bundles.

\subsection{Everything is bid on by someone}

We can relax the condition on the valuations by allowing weight vectors $\weightOperator\in \RR\cup\{-\infty\}$. We assume that for every agent $b \in [m]$ the
valuation function $\valuation: (\RR\cup\{-\infty\})^d \to
\RR\cup\{-\infty\}$ is of the form $\valuation(\bundle) =
\inner{\weight,\bundle}$ such that $\weight$ has finite value on the
vertices and edges of some clique of $K_n$ and has weight $-\infty$ on
all vertices outside the clique.
If the vertex set of this clique is the subset $\partition^b\subseteq
[n]$, we say that agent $b$ \emph{bids on the subgraph}
$K_{\partition^b}$.  The \emph{support} of a valuation $\valuation$ is the set of vertices and edges where $\weight$ has finite value.
A set of valuations $\mset{\valuation \mid b\in[m]}$ is \emph{covering} if every agent $b \in [m]$ bids on
a clique $K_{\partition^b}$ such that $\bigcup_{b \in [m]}
\partition^b = [n]$.
A vector $\bundle\in\RR^d$ is \emph{compatible} with this covering if for every $ij \in \binom{[n]}{2}$ such that $a_{ij}> 0$ there is a set $S^b$ such that $i,j\in S^b$.
We are now ready to state the generalization of \Cref{thm:main}.
\begin{theorem}\label{thm:main-general}
	If $\mathcal{V}$ is the collection of sets of covering graphical valuations, then a
	competitive equilibrium with anonymous graphical pricing is guaranteed
	to exist at \emph{any} bundle $a^\ast \in \{0,1\}^n$ for
	\emph{any} auction with valuations in $\mathcal{V}$. 
\end{theorem}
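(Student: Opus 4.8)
The plan is to reduce \Cref{thm:main-general} to the complete-graph result \Cref{th:const_size} using a single geometric observation: each subclique polytope $P(K_{S^b})$ is a \emph{face} of the ambient polytope $P(K_n)$, namely the face where the functional $-\sum_{i \notin S^b} x_i$ is maximized (its maximizers among the vertices $\chi_{K_S}$ are exactly those with $S \subseteq S^b$). Consequently every face of $P(K_{S^b})$ is also a face of $P(K_n)$, and its vertex set is the same computed in either polytope. Since an agent bidding on $K_{S^b}$ can only demand bundles supported on $S^b$ (its $-\infty$ weights kill everything else), its demand set $D(v^b,p)$ is always the set of lattice points of a face of $P(K_{S^b})$, hence of $P(K_n)$.

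First I would build the witness bundle. Because the profile is covering, $\bigcup_b S^b = [n] \supseteq \supp(a^\ast)$, so I can assign each item $i$ with $a^\ast_i = 1$ to some agent $b(i)$ with $i \in S^{b(i)}$ and group the items by the chosen agent; this yields a partition $\supp(a^\ast) = \bigsqcup_b T^b$ with $T^b \subseteq S^b$. Setting $a := \sum_b \chi_{K_{T^b}}$ gives a lattice point with $\pi(a) = a^\ast$ that is compatible with the covering: if $a_{ij} > 0$ then $i,j$ lie in a common $T^b \subseteq S^b$. Compatibility guarantees $a \in \sum_b P(K_{S^b})$, so $a$ lies in some cell of the regular mixed subdivision induced by the covering profile; that cell has the form $\sum_b F^b$ with each $F^b$ a face of $P(K_{S^b})$, and $a \in \sum_b F^b$.

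Next I would import the combinatorial heart of the statement. Applying \Cref{th:const_size} with $r = 1$ (legitimate since $m \ge 1 = r$) to this particular $a \in \pi^{-1}(a^\ast)$ and then \Cref{prop:key-long} shows that condition \Cref{prop:key-long}\,(\ref{item:key-long}) holds for $P(K_n)$ at $a$: for any faces $F^1,\dots,F^m$ of $P(K_n)$, $a \in \sum_b F^b$ forces $a \in \sum_b \vertices(F^b)$. By the first paragraph, the faces $F^b$ coming from the covering cell are faces of $P(K_n)$, so this implication applies directly and gives $a \in \sum_b \vertices(F^b)$. Because each $v^b$ is affine on $F^b$ and $\vertices(F^b) \subseteq \vertices(P(K_{S^b}))$ is supported on $S^b$, the resulting vertex decomposition $a = \sum_b a^b$, $a^b \in \vertices(F^b)$, is an honest allocation with $a^b \in D(v^b,p)$ for the price $p$ supporting the cell; this exhibits a competitive equilibrium at $a$, hence at $a^\ast$.

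The step I expect to be the genuine obstacle is making precise the analogue of \Cref{prop:key-long} for $\RR \cup \{-\infty\}$ valuations, which the above argument uses implicitly. Concretely, one must verify that (i) demand sets of subclique valuations are exactly the lattice points of faces of $P(K_{S^b})$ and that every such face is realized by some finite weight and price; (ii) the point $a$, which is only asserted to lie in the cell as a convex combination, actually admits an \emph{integral} decomposition into the cell's faces --- this is precisely what condition \Cref{prop:key-long}\,(\ref{item:key-long}) delivers, and it simultaneously forces $a$ onto the upper hull (a lifted point), so that a supporting price exists; and (iii) that the extracted allocation respects supports, which holds automatically since vertices of a face of $P(K_{S^b})$ are supported on $S^b$. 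Once these bookkeeping points are in place the reduction is immediate, and no new combinatorics of the correlation polytope beyond \Cref{th:const_size} is required.
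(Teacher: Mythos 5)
Your proposal is correct in substance, but it takes a genuinely different route from the paper. The paper's proof of \Cref{thm:main-general} (given jointly with \Cref{thm:generalize-to-main}) never works with the $-\infty$ weights geometrically: it truncates them to a large finite value $-M$, obtaining honest full-support valuations $\valuationtilde$ on $K_n$, applies \Cref{th:const_size} to those, and then argues that for $M$ sufficiently large any bundle demanded by agent $b$ at the resulting price must avoid vertices outside $\partition^b$ (since otherwise its utility drops below that of the empty bundle), so the same allocation-price pair is an equilibrium for the original valuations. You instead stay with the partial valuations and make the key structural observation that the paper avoids: $P(K_{\partition^b})$ is a face of $P(K_n)$, demand sets of support-restricted valuations are vertex sets of faces of $P(K_{\partition^b})$ and hence of $P(K_n)$, so \Cref{prop:decomp:const} (equivalently \Cref{th:const_size} via \Cref{prop:key-long}) applies verbatim to the cell of the mixed subdivision of $\sum_b P(K_{\partition^b})$ containing your witness point. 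The witness construction itself (partition the supported items among agents covering them and sum the clique characteristic vectors) is essentially identical to the paper's choice of $V^1\sqcup\dots\sqcup V^m$. What each approach buys: yours avoids the ``sufficiently large $M$'' step entirely --- a step the paper treats lightly, since the equilibrium price there is produced from valuations that themselves depend on $M$, so the required lower bound on $M$ needs justification --- and it produces the supporting price directly from the cell structure. The cost, which you correctly flag yourself, is that \Cref{prop:key-long} as stated in the paper covers only finite valuations and $m$ copies of one polytope $P(G)$; your argument needs its analogue for heterogeneous tuples $P(K_{\partition^1}),\dots,P(K_{\partition^m})$ and $\RR\cup\{-\infty\}$ weights. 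That extension is routine (the Cayley-trick machinery is indifferent to the summands being equal, and your items (i)--(iii) are exactly the right checklist), but it is genuine setup work that the paper's truncation trick is specifically designed to sidestep.
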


The above theorem immediately follows from this more technical theorem:
\begin{theorem}\label{thm:generalize-to-main}
	Let $\bundleStar\in \{0,\rr\}^n$ and suppose that every agent $b \in [m]$ bids on
	a clique $K_{\partition^b}$ such that $\bigcup_{b \in [m]}
	\partition^b = [n]$. Then for any set of valuations $\{\valuation \mid \valuation \text{ is supported on } K_{\partition^b} \}$ and any  $\bundle\in
	\piInverse(\bundleStar)$ that is compatible with the covering there exists a price $p\in\RR^d$ at which a competitive equilibrium
	exists.
\end{theorem}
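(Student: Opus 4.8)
The plan is to deduce \Cref{thm:generalize-to-main} from the complete-graph case \Cref{th:const_size}, by observing that an agent who bids only on the clique $K_{\partition^b}$ is geometrically confined to the subpolytope $P(K_{\partition^b})$, and that this subpolytope is itself a face of $P(K_n)$. Consequently the face-tuples that govern competitive equilibrium for covering valuations form a \emph{subfamily} of those for the complete graph, so the equilibrium guarantee of \Cref{th:const_size} should descend to the covering setting essentially for free.

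First I would make the effect of the $-\infty$ weights precise. Because $\weight$ is $-\infty$ on every vertex outside $\partition^b$, the demand set $\demand{\valuation}{\price}$ contains only subsets $S \subseteq \partition^b$; restricting $\valuation$ to $K_{\partition^b}$ gives an ordinary finite graphical valuation, so, exactly as in \Cref{subsec:conv-geom}, $\conv(\demand{\valuation}{\price})$ is a face of $P(K_{\partition^b})$. The version of \Cref{prop:key-long} adapted to covering valuations then states that a competitive equilibrium exists at the compatible bundle $\bundle$ if and only if, for every choice of faces $F^b$ of $P(K_{\partition^b})$, one has $\bundle \in \sum_{b=1}^m F^b \Rightarrow \bundle \in \sum_{b=1}^m \vertices(F^b)$. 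I would then check that each $P(K_{\partition^b})$ is a face of $P(K_n)$: the functional $x \mapsto -\sum_{i \notin \partition^b} x_i$ attains its maximum $0$ on $P(K_n)$ precisely at the vertices $\bundle_T$ with $T \subseteq \partition^b$, and on those vertices all edge coordinates leaving $\partition^b$ vanish, so the face it cuts out is exactly $P(K_{\partition^b})$. Hence every $F^b$ above is also a face of $P(K_n)$.

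With this dictionary in place, the reduction is short. We may assume $\supp(\bundleStar) \neq \emptyset$, as otherwise $\bundle = 0$ and there is nothing to prove. Fix faces $F^b$ of $P(K_{\partition^b})$ with $\bundle \in \sum_{b=1}^m F^b$ (otherwise the implication is vacuous), and write a fractional decomposition $\bundle = \sum_{b=1}^m x^b$ with $x^b \in F^b \subseteq P(K_{\partition^b})$. Reading off the coordinate of any $i$ with $\bundleStar_i = \rr$ gives $\rr = \sum_{b:\, i \in \partition^b} x^b_i \le |\{b : i \in \partition^b\}|$, so at least $\rr$ agents bid on each item of the support and in particular $m \ge \rr$. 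Thus \Cref{th:const_size} applies to $K_n$ with these $m$ agents, and via \Cref{prop:key-long} (\ref{item:key-long}) the implication $\bundle \in \sum_b F^b \Rightarrow \bundle \in \sum_b \vertices(F^b)$ holds for \emph{all} faces of $P(K_n)$, hence for our $F^b$. The resulting vertices lie in $\vertices(F^b) \subseteq \vertices(P(K_{\partition^b}))$ and therefore correspond to admissible bundles $T^b \subseteq \partition^b$, so the adapted \Cref{prop:key-long} delivers the competitive equilibrium.

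The step I expect to demand the most care is the first: verifying that covering ($-\infty$-weighted) valuations interact with \Cref{prop:key-long} solely through faces of the subpolytopes $P(K_{\partition^b})$, and that these subpolytopes embed as faces of $P(K_n)$ so that no genuinely new faces appear. Once that is settled, the remaining hypotheses play clean roles: compatibility of $\bundle$ is exactly what allows its nonzero edge coordinates to be supported inside the cliques (so that $\bundle \in \sum_b F^b$ is possible at all), and the counting argument supplies the agent-count hypothesis $m \ge \rr$ of \Cref{th:const_size} automatically in every non-vacuous case.
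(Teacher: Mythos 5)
Your route is genuinely different from the paper's, but it has a real gap at its foundation: the ``adapted version of \Cref{prop:key-long} for covering valuations'' is asserted, never proved, and the direction you need is not a formality. In the paper, the implication (face condition $\Rightarrow$ existence of a CE price) is obtained by placing $\bundle$ in a cell of the regular mixed subdivision of $mP(K_n)$ induced by the aggregate valuation of $m$ \emph{finite} valuations on the same graph; to adapt this you must rerun the Cayley-trick argument for the heterogeneous sum $P(K_{\partition^1})+\dots+P(K_{\partition^m})$, and that argument only begins if $\bundle\in\sum_{b=1}^m P(K_{\partition^b})$. You treat the alternative as a harmless ``vacuous'' case, but it is not: if $\bundle\notin\sum_{b=1}^m P(K_{\partition^b})$, your face criterion holds vacuously while no competitive equilibrium can exist at $\bundle$ (any CE allocation writes $\bundle$ as a sum of demanded bundles, which are vertices of the $P(K_{\partition^b})$), so the claimed equivalence is false there and your argument yields nothing, although the theorem still asserts existence. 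Thus the actual content --- that compatibility forces $\bundle\in\sum_{b=1}^m P(K_{\partition^b})$ --- is exactly what you never prove. Moreover it cannot be proved from the stated hypotheses when $\rr>1$: for $n=2$, $\rr=m=2$, $\partition^1=\{1,2\}$, $\partition^2=\{2\}$, the point $\bundle=(2,2,2)\in\piInverse((2,2))$ is compatible, yet agent $2$ can never demand item $1$, so $\bundle\notin P(K_{\partition^1})+P(K_{\partition^2})$ and no CE exists at $\bundle$. So your reduction, as written, cannot be completed in the stated generality; for $\rr=1$ (the case that feeds \Cref{thm:main-general}) the membership statement is plausible but still requires an argument, and it is where compatibility would have to do real work.

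For contrast, the paper avoids all of this by never doing demand theory for valuations with $-\infty$ entries. It replaces each $-\infty$ by $-M$, uses the covering to choose a partition $[n]=V^1\sqcup\dots\sqcup V^m$ with $V^b\subseteq\partition^b$, takes $\bundle$ to be the characteristic vector of the disjoint cliques on the $V^b$ (so membership in the Minkowski sum holds by construction), applies \Cref{th:const_size} to the truncated complete-graph valuations, and then checks that for $M$ large every demanded bundle stays inside its bidder's clique (a bundle touching a $-M$ vertex is worse than the empty bundle), so the same allocation-price pair is a CE for the original valuations. Note that the paper's proof, too, really establishes the claim only for bundles of this constructed form rather than for every compatible $\bundle$. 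Your two correct observations --- that $P(K_{\partition^b})$ is the face of $P(K_n)$ cut out by the functional $-\sum_{i\notin\partition^b}x_i$, and that $\rr=\sum_{b}x^b_i\le m$ whenever a fractional decomposition subordinate to the cliques exists --- are worth keeping, but they are the easy part; the equivalence between the face condition over the subpolytopes and CE existence is where the work lies, and it is missing.
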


The proof of \Cref{thm:generalize-to-main} implies \Cref{alg:ce} below to compute an optimal competitive equilibrium.


\begin{algorithm}[H]
	\caption{} \label{alg:ce} 
	\begin{algorithmic}[1]
		\Require The agent's quadratic bids $\weight[1],\dots,\weight[m]$.
		\Ensure An allocation-price pair $((\bundle^1,\dots,\bundle^m), \price)$ which achieves an optimal competitive equilibrium.
		\State Check that $\weight[1],\dots,\weight[m]$ give a covering of $[n]$
		\State Choose a partition $(V^1,\dots,V^m)$ of $[n]$ such that $\weight$ is supported on $V^b$ (this defines $\bundle \in \piInverse(\bundleStar)$)
		\State Let $\weighttilde$ be the vector where every weight of $-\infty$ in $\weight$ is replaced by $-M$ for some large $M>0$. Go through all decompositions $\bundle = \sum_{b=1}^m \bundle^{ b} ,  \bundle^{b} \in\vertices(P(K_n))$, check for feasibility and compute the value of
		\begin{align*}
			&\max \inner{\price,\bundle} \\
			&\text{s.t.} \inner{\price - \weighttilde, \bundle^{b}} \geq \inner{\price - \weighttilde, \bundle^\prime} \forall \bundle^\prime \in \vertices(P(K_n)), b\in [m] 
		\end{align*}
		\State \Return $((\bundle^1,\dots,\bundle^m), \price)$ at which the linear program above has the maximal value.
	\end{algorithmic}
\end{algorithm}

\section{Proofs}\label{sec:proofs}

We now proof the results that are stated in the previous sections. First, we give the proofs of the results stated in \Cref{sec:background}. Next we proof auxiliary results that are needed for the proofs of the main results, which are stated in \Cref{sec:results}. Finally, we give an index that refers the reader to the necessary auxiliary results for the respective main result.

\subsection{Convex results}

\begin{proof}[Proof of \Cref{adding-const-vectors}:]
	The lifting function on the Cayley polytope $\mathcal{C}(P(G),\dots,P(G))\subseteq~\RR^d \times~\RR^m$ corresponding to the mixed subdivision on $mP$ induced by $\Valuation$ is 
	\[
	\widetilde{\Valuation} : (\bundle, e_b) \mapsto \valuation(\bundle) = \inner{\weight,\bundle}
	\]
	(cf. \cite[Cor.~4.10]{joswig}). Adding the constant vector $w$ to all weight vectors $\weight$ yields another lifting function of the Cayley polytope
	\[
	\widetilde{\Valuation}_w : (\bundle,e_b) \mapsto \inner{\weight + w, \bundle} = \valuation(\bundle) + \inner{\begin{pmatrix} w \\ \mathbf 0 \end{pmatrix}, \begin{pmatrix} \bundle \\ e_b \end{pmatrix}}.
	\]
	Thus, $\widetilde{\Valuation}_w = \widetilde{\Valuation} + \inner{\begin{pmatrix} w \\ \mathbf 0 \end{pmatrix}, \cdot}$, i.e. adding $w$ to all weights $\weight$ amounts to  adding a linear functional to the lifting function $\Valuation$. This operation does not change the regular subdivision on $\mathcal{C}(P(G),\dots,P(G))$ and hence leaves the corresponding mixed subdivision of $mP(G)$ unchanged.
\end{proof}
$ $
\begin{proof}[Proof of \Cref{prop:key-long}:]
	First note, that (i) and (ii) are equivalent by the definition of CE at $\bundle \in \piInverse(\bundleStar)$ in \Cref{def:ce}. It thus remains to show the equivalence of (ii) and (iii). 
	Before that, note that each face $F^b$ of the regular subdivision of
	$P(G)$ induced by a valuation  $\valuation$ is given by $F^b =
	\conv(\demand{\valuation}{\price^b})$ for some price $\price^b \in
	\RR^d$.
	The valuations $\valuation$ are linear functions on $\RR^d$. Therefore, the
	regular subdivision induced by $\valuation$ on $P(G)$ is trivial.
	The set of lifted points of $P(G)$ is the set of vertices
	$\vertices(P(G))$. Further, each face $F$ of the regular subdivision
	of $mP(G)$ induced by the aggregate valuation $\Valuation$ corresponds to a
	set $\demand{\Valuation}{\price}$ for some $\price\in\RR^d$, where
	$\demand{\Valuation}{\price}$ is the set of all lifted points in
	$F$. Since $\demand{\Valuation}{\price} = \sum_{j=1}^{m}
	\demand{\valuation}{\price}$, the set of lifted points of $mP(G)$ is
	\[\bigcup_{\price\in\RR^d} \demand{\Valuation}{\price} =
	\bigcup_{\price\in\RR^d} (\sum_{j=1}^m
	\demand{\valuation}{\price}) \subseteq \sum_{b=1}^m
	\vertices(P(G)).\] 
	We now show $\neg (ii) \implies \neg (iii)$. Suppose there is a set of valuations $\mset{\valuation \given b\in[m]
	}$ such that for all $\price\in\RR^d$ holds
	$\bundle\not\in \demand{\Valuation}{\price} = \sum_{b=1}^m
	\demand{\valuation}{\price}$. Since $\bundle = \piInverse(\bundleStar) \subseteq mP(G)$, $\bundle$ lies in some face of the regular subdivision of
	$mP(G)$ induced by $\Valuation$. These faces are in bijection with
	the distinct sets $\demand{\Valuation}{\price}$, so there exists
	some $\price\in\RR^d$ such that $\bundle\in
	\conv(\demand{\Valuation}{\price})$. The assumption $\bundle\not\in
	\demand{\Valuation}{\price}$ implies that $\bundle$ is not a lifted
	point. Note that, since Minkowski summation and the operator of
	forming convex hulls commute,
	
	\begin{align*}
		\bundle\in \conv(\demand{\Valuation}{\price})
		&=
		\conv(\sum_{b=1}^m
		\demand{\valuation}{\price})
		\\ 
		&= \sum_{b=1}^m \conv(\demand{\valuation}{\price}) \\
		&= \sum_{b=1}^m F^b
	\end{align*}
	for some faces $F^1,\dots,F^m$ of $P(G)$. Since $P(G)$ is a
	$0/1$-polytope, for each $b\in [m]$ the vertices $\vertices(F^b)$
	are precisely the lifted points of $F^b$ in the trivial subdivision
	induced by~$\valuation$, i.e. $\vertices(F^b) =
	\demand{\valuation}{\price}$. Therefore \[\sum_{b=1}^m
	\vertices(F^b) =\sum_{b=1}^m \demand{\valuation}{\price} =
	\demand{\Valuation}{\price},\] so $\sum_{b=1}^m \vertices(F^b)$ is
	a set of lifted points in the subdivision induced by
	$\Valuation$. By assumption, $\bundle$ is not a lifted point, and
	thus $\bundle\not\in \sum_{b=1}^m \vertices(F^b)$. \\
	Next, we show $\neg (iii) \implies \neg (ii)$. Suppose $\bundle\in \sum_{b=1}^m F^b$ but
	$\bundle\not\in\sum_{b=1}^m (\vertices(F^b))$ and let $\price\in\RR^d$. 
	Let $-p^b$ be the normal vector the face $F^b$ and consider the valuation 
	\[
	\valuation(\bundle) = \inner{\price - \price^b, \bundle}.
	\]
	Then 
	\[
	\demand{\valuation}{\price} = \argmax_{\bundle\in P(G)}\mset{
		\inner{\price -
			\price^b,\bundle}-\inner{\price,\bundle}}
	=  \argmax_{\bundle\in P(G)}\mset{
		-\inner{\price^b,\bundle}} = \vertices(F^b)
	\]
	and hence $\bundle\not\in\sum_{b=1}^m (\vertices(F^b))$  implies $\bundle\not\in\sum_{b=1}^m \demand{\valuation}{\price}$.
	
\end{proof} $ $

\subsection{Auxiliary results}

For the complete graph $G=K_n$, the polytope $P(K_n)$ is generally known as the \emph{correlation polytope}  or \emph{boolean quadric polytope} \cite{Padberg89}. It is also affinely equivalent to the well-known cut-polytope \cite{lectures}. It has been widely studied, yet, the hyperplane description of this polytope remains unknown. We thus make use of a linear relaxation given in \cite{Padberg89}:

\begin{enumerate}[(i)]
	\item $x_{ij}\geq0$
	\item $x_i - x_{ij} \geq 0$
	\item $x_i + x_j - x_{ij} \leq m$
	\item $x_i + x_{jk} - x_{ij} - x_{ik} \geq 0$
	\item $x_i + x_j + x_k - x_{ij} - x_{ik} - x_{jk} \leq m$
\end{enumerate} 
for all $i,j,k\in [n]$. Note that the inequalities (i) -- (v)  define a bounded polyhedron, containing the polytope $mP(K_n)$.
In fact, for $m=1$ the inequalities (i) -- (iii) together with the constraint $x\in\ZZ^d$ yield the vertices of the correlation polytope, and for $n\leq 3$ the inequalities (i) -- (v) suffice to describe the polytope completely \cite[Section 2]{Padberg89}. \\

\begin{lemma}\label{lem:decomposition_into_graphs}
	Let $\bundle\in \{0,\rr\}^d$ satisfy inequalities (i), (ii) and (iv) for some $\rr\in\N$. Then $\bundle$ is the sum of characteristic vectors of pairwise disjoint complete graphs $G_1\dots G_s$. More precisely,
	\[\bundle = \sum_{t=1}^{s} \rr \chi^{t}, \]
	where $\chi^{t}$ denotes the characteristic vector of $G_t$ and $s\leq n$.
\end{lemma}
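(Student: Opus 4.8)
My plan is to reduce the statement to a purely combinatorial fact about $0/1$ vectors and then recognize that inequalities (ii) and (iv) encode exactly the structure of a disjoint union of cliques (a ``cluster graph''). If $\rr = 0$ then $\bundle = \mathbf{0}$ and the empty sum ($s=0$) works, so I would assume $\rr > 0$ and set $y = \bundle/\rr \in \{0,1\}^d$. Since inequalities (i), (ii) and (iv) are homogeneous, $y$ satisfies them as well, and its entries lie in $\{0,1\}$. I then read $y$ as a graph: let $V = \{i \in [n] : y_i = 1\}$ be the active vertices and $E = \{ij : y_{ij} = 1\}$ the active edges. Inequality (ii), applied to each endpoint of an edge, gives $y_{ij} \leq \min(y_i, y_j)$, so every active edge has both endpoints active; hence $H = (V,E)$ is a genuine graph on a subset of $[n]$.

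The heart of the argument is to show that $H$ is a disjoint union of complete graphs, equivalently that $H$ has no induced path on three vertices. Suppose $ij, ik \in E$, i.e. $y_{ij} = y_{ik} = 1$. Since $y_{ij} = 1$ forces $y_i = 1$ by (ii), inequality (iv) for the triple $i,j,k$ reads $1 + y_{jk} - 1 - 1 \geq 0$, whence $y_{jk} \geq 1$ and therefore $y_{jk} = 1$, i.e. $jk \in E$. Thus any two neighbours of a common vertex are themselves adjacent, which is precisely the condition ruling out an induced $P_3$; by the standard characterisation of cluster graphs, every connected component of $H$ is then a clique.

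With this structural fact in hand, I would let $C_1, \dots, C_s$ be the connected components of $H$ (an isolated active vertex counting as a $K_1$) and let $\chi^t$ be the characteristic vector of the complete graph $G_t$ on $C_t$. These cliques are pairwise vertex-disjoint, and I would verify coordinatewise that $y = \sum_{t=1}^s \chi^t$: a vertex coordinate $y_i$ equals $1$ exactly when $i$ lies in a (unique) component, while an edge coordinate $y_{ij}$ equals $1$ exactly when $i,j$ lie in a common component, which by the clique property coincides with $ij \in E$. Rescaling by $\rr$ gives $\bundle = \sum_{t=1}^s \rr\,\chi^t$. Finally, the $C_t$ partition $V \subseteq [n]$ into nonempty blocks, so $s \leq |V| \leq n$.

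I expect the only genuine content to be the second paragraph: translating inequality (iv) into transitivity of adjacency and invoking the cluster-graph characterisation. Inequality (i) is automatic from $\bundle \in \{0,\rr\}^d$ with $\rr \geq 0$ and plays no role beyond nonnegativity, while (ii) serves only to place the active edges inside the active vertex set.
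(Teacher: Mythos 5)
Your proof is correct and follows essentially the same route as the paper's: rescale by $\rr$ to get a $0/1$ vector, read it as a graph via inequality (ii), use inequality (iv) to show that two neighbours of a common vertex must be adjacent (no induced $P_3$), and conclude that the graph is a disjoint union of cliques with at most $n$ components. The only cosmetic differences are that you invoke the standard cluster-graph characterisation where the paper runs a short induction on path length, and that you dispose of the $\rr=0$ edge case explicitly.
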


\begin{figure}[ht]
	\centering
	\includegraphics[page=3]{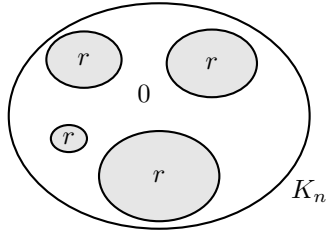}
	\caption{The vector $\bundle$ is the sum of $\rr$ copies of characteristic vectors of pairwise disjoint complete graphs $G_1,\dots,G_s$.}
\end{figure}

\begin{proof}
	Since $\bundle$ satisfies inequalities (i) and (ii), $\frac{1}{\rr}\bundle\in \{0,1\}^d$ satisfies these inequalities as well. We can therefore interpret $\frac{1}{\rr}\bundle$ as the characteristic vector of a (non-complete) graph $\hat{G}$, i.e $\frac{1}{\rr}\bundle=\chi_{\hat{G}}$ and thus $a=\rr\chi_{\hat{G}}$ where $V(\hat{G})\subseteq [n]$.
	
	Inequality (iv) forbids an induced subgraph consisting of a path of length two (and not a triangle), since otherwise this would imply a set $\{i,j,k\}$ of indices such that 
	\[\bundle_i=\bundle_j=\bundle_k=\bundle_{ij}=\bundle_{ik}=\rr, \quad \bundle_{jk}=0,\] and thus
	\[ \bundle_i + \bundle_{jk} - \bundle_{ij} - \bundle_{ik} = -\rr. \]
	Whenever two vertices are connected via a path of length two, they must hence be adjacent. Inductively follows, that whenever two vertices are connected via a path of arbitrary length, they must be adjacent as well. Therefore, $\hat{G}$ is the disjoint union of non-empty complete graphs $G_1,\dots G_s$ and 
	\[ \bundle = \sum_{t=1}^{s} \rr \chi^{t}, \]
	where $\chi^t$ denotes the characteristic vector of $G_t$. Further,
	\[ 
	n \geq  \vert V(\hat{G}) \vert
	= \sum_{t=1}^{s} \vert V(G_t) \vert
	\geq \sum_{t=1}^{s} 1
	= s.
	\]
\end{proof}

\begin{proposition}\label{prop:decomp:const}
	Let $\bundle \in \mset{0,\rr}^d$. Then for all faces $F^1,\dots,F^m$ of $P(K_n)$ containing $\bundle\in\sum_{b=1}^m F^b$ in their Minkowski sum, there exist vertices $\chi^b$ of $F^b$ such that $\bundle=\sum_{b=1}^m \chi^b$. 
\end{proposition}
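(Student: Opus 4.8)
The plan is to reduce the problem of selecting vertices $\chi^b\in\vertices(F^b)$ to an integral transportation problem, once the rigid combinatorial structure forced by $\bundle\in\sum_{b=1}^m F^b$ has been pinned down. First I would record that, since each $F^b\subseteq P(K_n)$, the Minkowski sum satisfies $\sum_{b=1}^m F^b\subseteq mP(K_n)$, so $\bundle$ obeys all of the Padberg inequalities (i)--(v); in particular (i), (ii), (iv). By \Cref{lem:decomposition_into_graphs} this gives a decomposition $\bundle=\sum_{t=1}^s \rr\,\chi^t$ into characteristic vectors of pairwise disjoint complete graphs $G_1,\dots,G_s$ with vertex sets $V_1,\dots,V_s$, where $\bigcup_t V_t=\supp(\bundleStar)$ and each $\chi^t=\chi_{V_t}$ is a vertex of $P(K_n)$.

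Next I would fix any decomposition $\bundle=\sum_{b=1}^m y^b$ with $y^b\in F^b$ and write $y^b=\sum_S \lambda^b_S\chi_S$ as a convex combination of vertices of $P(K_n)$; since $F^b$ is a face containing $y^b$, every $\chi_S$ with $\lambda^b_S>0$ is actually a vertex of $F^b$. The key structural claim is that every such support set $S$ is either $\emptyset$ or one of the cliques $V_t$. I would prove this by a tightness argument: for $j\notin\bigcup_t V_t$ we have $\bundleStar_j=0$, so nonnegativity forces $y^b_j=0$ and hence $j\notin S$; for $i,j$ in distinct cliques $\bundle_{ij}=0$, so (i) forces $y^b_{ij}=0$ and $S$ cannot meet two cliques; and for $i,j$ in the same clique $V_t$ one has $\sum_b(y^b_i-y^b_{ij})=\rr-\rr=0$ with every summand nonnegative by (ii), so $y^b_i=y^b_{ij}$ for all $b$, which forbids $i\in S,\,j\notin S$. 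Together these force $S\in\{\emptyset,V_1,\dots,V_s\}$, and moreover $y^b_i=\lambda^b_{V_t}$ for each $i\in V_t$.

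Finally I would set up a transportation problem whose sources are the faces $b\in[m]$, each with supply $1$, whose sinks are the cliques $t$ with demand $\rr$ together with a dummy sink of demand $m-s\rr$, and where an edge $(b,t)$ (respectively $(b,0)$) is allowed exactly when $\chi_{V_t}$ (respectively $0$) is a vertex of $F^b$. The fractional assignment $x_{b,t}=\lambda^b_{V_t}$, $x_{b,0}=\lambda^b_\emptyset$ is feasible: each supply sums to $1$ by convexity, and $\sum_b x_{b,t}=\sum_b y^b_i=\rr$ for any $i\in V_t$, so in particular $m-s\rr=\sum_b\lambda^b_\emptyset\geq 0$. Since the incidence matrix of a bipartite transportation problem is totally unimodular and all margins are integral, a feasible integral (hence $0/1$, as the supplies are $1$) assignment exists, supported on the allowed edges. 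Reading it off yields $\chi^b=\chi_{V_{\sigma(b)}}\in\vertices(F^b)$ (or $\chi^b=0$) with $\sum_{b=1}^m\chi^b=\sum_{t=1}^s\rr\,\chi_{V_t}=\bundle$, as required.

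I expect the main obstacle to be the structural claim of the second paragraph: extracting, from the single containment $\bundle\in\sum_b F^b$, the fact that the relevant support vertices are exactly the empty set or the full cliques $V_t$. The mechanism is that inequalities holding with slack on individual summands sum to an equality and are therefore forced to be tight, which collapses all admissible sub-cliques to the $V_t$ themselves. Everything downstream is then a routine integral-transportation argument.
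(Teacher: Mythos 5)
Your proof is correct and follows essentially the same route as the paper's: decompose $\bundle = \sum_{t=1}^s \rr\,\chi^t$ via \Cref{lem:decomposition_into_graphs}, pin the support vertices of the faces down to $\{\mathbf{0}, \chi^1,\dots,\chi^s\}$ by the same slack-summing tightness argument on inequalities (i)--(ii), and finish with bipartite-flow integrality (the paper's \Cref{lem:perfect:matching} is exactly your transportation step, proved via max-flow-min-cut rather than total unimodularity). The one genuine economy on your side is that you obtain $m - s\rr \geq 0$ as a byproduct of feasibility of the fractional transportation solution ($\sum_b \lambda^b_\emptyset = m - s\rr$), whereas the paper derives it separately from the auxiliary valid inequality (vi').
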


\begin{proof}
	
	If there exist faces $F^1,\dots,F^m$ of $P(K_n)$ such that $\bundle\in\sum_{b=1}^m F^b$, then in particular $a\in mP(K_n)$. Thus, the inequalities (i) -- (v) hold for $\bundle$ and by \Cref{lem:decomposition_into_graphs} we have
	\[\bundle = \sum_{t=1}^{s} \rr \chi^{t}, \]
	where $\chi^t$ is the characteristic vector of a complete graph $G_t$ for each $t\in[s]$, the graphs $G^1,\dots G^s$ are pairwise disjoint, and $s\leq n$. Choose $ \mset{k_i \given i\in[s]} $ such that $k_i \in V(G_i)$ for each $i\in[s]$ and
	consider the following inequalities 
	\[ 
	\text{(vi') } \
	\sum_{i\in [s]} x_{k_i} 
	- \sum_{ij\in \binom{[s]}{2}} x_{k_i k_j}
	\leq 1
	\qquad \
	\text{(vi) } \
	\sum_{i\in [s]} x_{k_i} 
	- \sum_{ij\in \binom{[s]}{2}} x_{k_i k_j}
	\leq m.
	\]
	Each vertex of $P(K_n)$ satisfies the inequality (vi') and so $\bundle\in mP(K_n)$ satisfies (vi).
	Since $\bundle_{k_i}=\rr$ and $\bundle_{k_i k_j}=0$ for all $i,j\in[s]$, we have
	
	\[ 
	\sum_{i\in [s]} \bundle_{k_i} 
	- \sum_{ij\in \binom{[s]}{2}} \bundle_{k_i k_j}
	= s\cdot \rr - 0	
	\leq m.
	\]
	We can thus write $ \bundle = \sum_{b=1}^{m} \hat{\chi}^{b}, $ where in this representation we have $\rr$ copies of $\chi^t$ for each $t\in [s],$ and $m-s\cdot \rr$ copies of the characteristic vector $\chi^0=\mathbf{0}$ of the empty graph $G_0$. 
	Our task is now to distribute the vertices $\hat{\chi}^b$ such that $\hat{\chi}^b \in F^b, b\in [m]$.
	
	By assumption,
	$\bundle\in (F^1 + \dots + F^m) $, i.e. we can find a subset $V(F^\letter)$ of vertices of the face $F^\letter$
	such that $\lambda_\point > 0$ for all $\point\in V(F^\letter)$ and
	\[\bundle = \sum_{\letter=1}^m \sum_{\point\in V(F^\letter)} \lambda_\point \point, 
	\quad
	\sum_{\point\in V(F^\letter)} \lambda_\point = 1  . \]
	We now show
	that $\bigcup_{\letter=1}^m V(F^\letter) = \left\{ \hat{\chi}^{b} \given b\in
	[m] \right\}$ and deduce from this that 
	\[
	r\chi^t = \sum_{b=1}^{m} \sum_{\substack{q \in V(F^b) \\ q = \chi^t}} \lambda_q q.
	\]  Then the following \Cref{lem:perfect:matching} implies that
	there is a labeling of the faces such that $\hat{\chi}^b \in F^b$ for
	all $b\in [m]$.
	\\ 
	
	We begin with showing $\bigcup_{\letter=1}^m V(F^\letter) \subseteq \left\{ \hat{\chi}^{b} \given b\in
	[m] \right\}$.
	Let $ \hat{\point}\in V(F^\letter) $ for some $ \letter\in[m] $ such that $ \hat{\point}\neq \mathbf{0}. $ Then there is some $i\in[n]$ such that $\hat{\point}_i = 1$ and since $ \lambda_{\hat{\point}}>0 $ we have $ \bundle_i = \rr. $ Hence, there is some $ t\in[s] $ such that $ i\in V(G_t). $
	We now show that $ \hat{\point}=\chi^t. $\\
	If $\hat{\point}_j = 1 $, then $ \hat{\point}_{ij}=1 $ and therefore $\bundle_{ij}=\rr$. This implies that $i$ and $j$ are contained in the same connected component in $\hat{G}$, i.e. $j\in V(G_t)$. Note that if $i$ is an isolated vertex in $\hat{G}$, then the assumption $\hat{\point}_j=1$ immediately leads to a contradiction, so in this case we have indeed $\hat{\point}=e_i$.\\
	If $\hat{\point}_j = 0$, then $ 0 = \hat{\point}_{ij} < \hat{\point}_i. $ This implies $ \bundle_{ij} = 0, $ since otherwise (by (ii))
	\begin{equation*}
		\rr = \bundle_{ij}
		= \sum_{\letter=1}^m \sum_{\point\in V(F^\letter)} \lambda_\point \point_{ij}
		< \sum_{\letter=1}^m \sum_{\price\in V(F^\letter)} \lambda_\point \point_i
		= \bundle_i = \rr. 
	\end{equation*}
	Hence, $ j\not\in V(G_t) $ and so $\hat{\point}=\chi^t \in \left\{ \hat{\chi}^{b} \given b\in
	[m] \right\}.$ \\
	
	Next, we show that $\bigcup_{\letter=1}^m V(F^\letter) \supseteq \left\{ \hat{\chi}^{b} \given b\in
	[m] \right\}$. Let $t\in [s]$ and $i\in V(G_t)$. Then $\bundle_i = \rr$,
	so there exists some $\hat{\point}\in \bigcup_{\letter=1}^m V(F^\letter)$
	such that $\hat{\point}_i = 1$. By the above, this implies
	$\hat{\point}=\chi^t$, and hence $\left\{\chi^1,\dots,\chi^s \right\}
	\subseteq \bigcup_{\letter=1}^m V(F^\letter)$. It remains to show that $\mathbf 0 \in \bigcup_{\letter=1}^m V(F^\letter)$ if and only if $s\cdot r\leq m$.
	For $t=0,\dots,s$, we define
	\[
	\mu_t = \sum_{\letter=1}^{m} \sum_{ \substack{ \point\in V(F^\letter) \\ \point=\chi^t } } \lambda_\point.
	\]
	We show that $\mu_0 = m - s\cdot r$. This implies that $\mathbf{0}\in
	\bigcup_{\letter=1}^m V(F^\letter)$ if and only if $ s\cdot \rr < m, $
	which is equivalent to $\mathbf{0}  \in \left\{\hat{\chi}^{b}
	\given j\in [m] \right\}.$ We can write 
	\[\bundle = \sum_{\letter=1}^m \sum_{\point\in V(F^\letter)} \lambda_\point \point
	\ = \sum_{t=0}^{s} \mu_t \chi^t.
	\]
	For $i\in V(G_t)$ we have
	\[\rr = \bundle_i = \sum_{t=0}^{s} \mu_t \chi^t_i 
	= \mu_t
	\]
	and so 
	\[ 
	s\cdot \rr + \mu_0 = \sum_{t=0}^{s} \mu_t 
	= \sum_{t=0}^{s} \sum_{\letter=1}^{m} \sum_{ \substack{ \point\in V(F^\letter) \\ \point=\chi^t } } \lambda_\point
	= \sum_{\letter=1}^{m} \sum_{ \point\in V(F^\letter) } \lambda_\point
	= \sum_{\letter=1}^{m} 1 = m.
	\]
	Thus, $\bigcup_{\letter=1}^m V(F^\letter) = \left\{ \hat{\chi}^{b}
	\given j\in [m] \right\}$. \\
	
	We finish the proof by showing that	
	\[
	r\chi^t = \sum_{b=1}^{m} \sum_{\substack{q \in V(F^b) \\ q = \chi^t}} \lambda_q q
	\]
	for all $t\in[s]$.
	Let $i\in V(G_t)$ for some $t\in[s]$. Since $\bigcup_{\letter=1}^m V(F^\letter) = \left\{ \hat{\chi}^{b}
	\given j\in [m] \right\}$ and the graphs $G^1,\dots,G^s$ are pairwise disjoint, we have that 
	
	\[
	r\chi^t_i = r = a_i = \sum_{b=1}^{m} \sum_{q \in V(F^b)} \lambda_q q_i = \sum_{b=1}^{m} \sum_{\substack{q \in V(F^b) \\ q = \chi^t}} \lambda_q q_i.
	\]
	A similar statement holds for the edge $ij$ for $i,j\in V(G^t)$. If $i\not\in V(G^t)$ then 
	\[
	r\chi^t_i = 0 = \sum_{b=1}^{m} \sum_{\substack{q \in V(F^b) \\ q = \chi^t}} \lambda_q q_i.
	\]
	and similarly for any $i,j\in[n]$ such that $ij$ is not an edge in $G^t$. Hence, all assumptions of 	\Cref{lem:perfect:matching} are fulfilled and 
	\Cref{lem:perfect:matching} implies that
	there is a labeling of the faces such that $\hat{\chi}^b \in F^b$ for
	all $j\in [m]$.
\end{proof}

\begin{lemma}\label{lem:perfect:matching}
	Suppose $\bundle = \sum_{t=1}^s \mu_t \chi^{t} = \sum_{b=1}^m \hat{\chi}^b$, such that 
	\begin{enumerate}[$\bullet$]
		\item 	$\mset{\chi^t \mid t\in[s]} =\mset{\hat{\chi}^b\mid b\in [m]}$,
		\item  $\chi^{t}$ is the
		characteristic vector of a complete subgraph $G_t$ of
		$K_n$, 
		\item  $\mu_t \in \N$ such that $\sum_{t=1}^{s} \mu_t = m$.
	\end{enumerate}
	Further, let $\bundle$ be contained in the Minkowski sum of
	faces $F^b, b\in[m]$, i.e.
	\[\bundle = \sum_{b=1}^m \sum_{\point\in V(F^b)} \lambda_\point
	\point, \sum_{\point\in V(F^b)} \lambda_\point = 1 \]
	where $V(F^b)$ is a subset of vertices of the face $F^b$ and
	$\lambda_\point > 0$.
	If for every $t\in[s]$ holds
	\[
	\mu_t \chi^t = \sum_{b=1}^m \sum_{\substack{\point\in V(F^b) \\ \point = \chi^t}} \lambda_\point
	\point,
	\]
	then there is a labeling of the faces
	such that $\chi^b \in F^b$.
\end{lemma}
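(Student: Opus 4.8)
The plan is to recognize the desired labeling as a perfect matching in a bipartite graph and to produce it via Hall's theorem, using the coefficients $\lambda_q$ as a fractional certificate. First, for each face index $b \in [m]$ and each type $t \in [s]$ I would set
\[
\lambda_{b,t} = \sum_{\substack{q \in V(F^b)\\ q = \chi^t}} \lambda_q.
\]
Because $P(K_n)$ is a $0/1$-polytope, its vertices are distinct, so at most one $q \in V(F^b)$ can equal the fixed vertex $\chi^t$; hence $\lambda_{b,t}$ is either $0$ or a single positive coefficient, and $\lambda_{b,t} > 0$ holds exactly when $\chi^t$ is a vertex of $F^b$. I then record the two marginal identities of the matrix $(\lambda_{b,t})$. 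The row sums are $\sum_{t=1}^s \lambda_{b,t} = \sum_{q \in V(F^b)} \lambda_q = 1$, since every $q \in V(F^b)$ equals exactly one $\chi^t$. For the column sums, the hypothesis $\mu_t \chi^t = \sum_{b=1}^m \sum_{q = \chi^t} \lambda_q q$ reads $\mu_t \chi^t = \big(\sum_b \lambda_{b,t}\big)\chi^t$, whence $\sum_b \lambda_{b,t} = \mu_t$ for every $t$ with $\chi^t \neq \mathbf 0$.

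If one of the $\chi^t$ is the zero vector $\mathbf 0$, the displayed identity is vacuous for that index, and its column sum must be recovered from the totals: summing the row sums gives $\sum_{b,t}\lambda_{b,t} = m$, while $\sum_t \mu_t = m$, so subtracting the columns with $\chi^t \neq \mathbf 0$ forces the remaining column sum to equal its $\mu_t$ as well. Next I would build the bipartite graph $H$ whose left vertices are the faces $b \in [m]$ and whose right vertices are $\mu_t$ distinct ``slots'' for each type $t$, joining $b$ to every slot of type $t$ precisely when $\lambda_{b,t} > 0$ (i.e.\ when $\chi^t$ is a vertex of $F^b$). A labeling as required in the lemma is exactly a perfect matching of $H$ saturating all $m$ faces and all $\sum_t \mu_t = m$ slots. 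The matrix $(\lambda_{b,t})$, distributed across the slots of each type, is a fractional perfect matching of $H$: its marginals are $1$ on each face and $\mu_t$ on each type, and its support lies in the edge set of $H$. Since the bipartite perfect-matching polytope is integral, the existence of this fractional point yields an integral perfect matching; equivalently, one checks Hall's condition directly, since for any $B \subseteq [m]$, writing $T(B)$ for the set of types adjacent to $B$, the row sums give $|B| = \sum_{b \in B}\sum_{t} \lambda_{b,t} = \sum_{b\in B}\sum_{t \in T(B)}\lambda_{b,t} \le \sum_{t \in T(B)}\sum_{b=1}^m \lambda_{b,t} = \sum_{t \in T(B)} \mu_t$, which is exactly the number of slots available to $B$.

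Finally, reading off the matching gives the conclusion: it assigns to each face $F^b$ a target vector $\hat{\chi}^{\sigma(b)}$ that is a genuine vertex of $F^b$, and the multiset of assigned vectors is exactly $\{\hat{\chi}^b : b\in[m]\}$, because each type $t$ is used in exactly $\mu_t$ slots. Relabeling the faces by $\sigma$ then yields $\hat{\chi}^b \in F^b$ for all $b$, as claimed. I expect the only genuine obstacle to be the bookkeeping for the empty-graph (zero) vertex, whose column sum is not delivered by the stated hypothesis and must be extracted from the global count $\sum_t \mu_t = m$; once both marginals are secured, the matching step is a routine application of bipartite matching and carries no further difficulty.
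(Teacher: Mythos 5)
Your proof is correct, and at its core it is the paper's own argument in different clothing. The paper aggregates the convex coefficients exactly as you do (it writes $\lambda_t^b$ for your $\lambda_{b,t}$), records the same two marginals (row sums $1$, column sums $\mu_t$), and then rounds the fractional solution to an integral one --- but it does so by building a source--sink network (source $\to$ faces with capacity $1$, face $\to$ type when $\chi^t \in V(F^b)$ with capacity $1$, type $\to$ sink with capacity $\mu_t$) and invoking the max-flow--min-cut theorem to extract an integral flow of value $m$, whereas you expand each type $t$ into $\mu_t$ slots and invoke Hall's theorem, equivalently the integrality of the bipartite perfect-matching polytope. These rounding tools are interchangeable in this setting, so that difference is cosmetic. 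One place where your write-up is genuinely more careful than the paper's: when some $\chi^{t_0}$ is the zero vector (which does occur in the applications, e.g.\ the $m - s\cdot r$ copies of the empty graph in \Cref{prop:decomp:const}), the hypothesis $\mu_{t_0}\chi^{t_0} = \sum_b \sum_{q = \chi^{t_0}} \lambda_q\, q$ reads $0=0$ and carries no information, so the column identity $\sum_b \lambda_{b,t_0} = \mu_{t_0}$ cannot be read off from it; the paper deduces $\mu_t = \sum_b \lambda_t^b$ directly from the vector identity, which is a small gap for that column, while you correctly recover it from the global count $\sum_{b,t}\lambda_{b,t} = m = \sum_t \mu_t$. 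Note that both you and the paper still implicitly use that every $q \in \bigcup_b V(F^b)$ with $\lambda_q > 0$ lies in $\{\chi^t \mid t \in [s]\}$ (needed for the row sums to equal $1$); this does follow from the stated hypotheses, by comparing the sum of the per-type identities with the full convex representation of $\bundle$ and using that all vertices are $0/1$-vectors, but neither proof spells it out.
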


\begin{proof}
	First note, that we can write 
	\[
	\mu_t \chi^t = \sum_{b=1}^m \sum_{\substack{\point\in V(F^b) \\ \point = \chi^t}} \lambda_\point
	\point =  \sum_{b=1}^m \lambda_t^b \chi^t,
	\]
	where 
	\[
	\lambda_t^b =   \sum_{\substack{\point\in V(F^b) \\ \point = \chi^t}} \lambda_\point
	\]
	and thus $0\leq\lambda_t^b \leq1$, and $\lambda_t^b > 0$ if and only if $\chi^t \in V(F^b)$. Further, note that this implies 
	\[
	\mu_t = \sum_{b=1}^m \lambda_t^b \ \text{  and   } \ \sum_{t=1}^s \lambda_t^b = \sum_{\point\in V(F^b)} \lambda_\point = 1 \text{ for a fixed } b\in[m].
	\]
	We now show that this constitutes a flow in a network. The Max-flow-min-cut theorem (e.g. \cite[Th. 7.2]{schrijver_theorylinearinteger}) then implies that an integer flow exists, which will give us the desired assignment of vertices $\chi^t$ and faces $F^b$. 
	
	Let $D=(V,E)$ be a directed graph with vertices 
	\[
	V = \mset{F^b \mid b\in [m]} \cup \mset{\chi^t \mid t\in[s]} \cup \mset{\hat{v}, \hat{w}},
	\]
	where $\hat{v}$ is the vertex of $D$ representing the source of the network, and $\hat{w}$ represents the sink. The directed edges of $D$ are
	\begin{align*}
		E = &\mset{(\hat{v}, F^b) \mid b \in [m]} \cup \mset{ (F^b, \chi^t) \mid b\in [m], t\in[s], \chi^t \in V(F^b) } \\ &\cup \mset{ (\chi^t, \hat{w}) \mid t\in[s]},
	\end{align*}
	and we consider the capacity function $c: E \to \RR_{>0}$ such that 
	\[
	c(e) = \begin{cases}
		1 & e = (\hat{v}, F^b) \text{ for some } b\in[m], \\
		1 & e = (F^b, \chi^t)  \text{ for some } b\in[m], t\in[s], \\
		\mu_t & e = (\chi^t, \hat{w})  \text{ for some } t\in[s].
	\end{cases}
	\]
	We claim that the following function $f:E\to \RR_{>0}$ constitutes a maximal flow on $D$
	\[
	f(e) = \begin{cases}
		1 & e = (\hat{v}, F^b) \text{ for some } b\in[m], \\
		\lambda_t^b & e = (F^b, \chi^t)  \text{ for some } b\in[m], t\in[s], \\
		\mu_t & e = (\chi^t, \hat{w})  \text{ for some } t\in[s],
	\end{cases}
	\]
	i.e. that for all vertices $v \in V\setminus\mset{\hat{v}, \hat{w}}$ holds
	\[
	\sum_{\substack{w \in V \\ e = (v,w)}} f(e) = \sum_{\substack{w \in V \\ e = (w,v)}} f(e) .
	\]
	To see this, let first $v = F^b$ for some $b\in [m]$. Then
	\[
	\sum_{\substack{w \in V \\ e = (w,v)}} f(e) = f(\hat{v},F^b) = 1  \text{  and  } 
	\sum_{\substack{w \in V \\ e = (v,w)}} f(e) = \sum_{t=1}^s \lambda_t^b = 1.
	\]
	If $v = \chi^t$ for some $t\in[s]$, then 
	\[
	\sum_{\substack{w \in V \\ e = (w,v)}} f(e) = \sum_{b=1}^m \lambda_t^b = \mu_t \ \text{ and } \ 	\sum_{\substack{w \in V \\ e = (v,w)}} f(e) = f(\chi^t, \hat{w}) = \mu_t.
	\]
	This is indeed a flow of maximal size $m$, since 
	$$m = \sum_{t=1}^s \mu_t = \sum_{t=1}^s f(\chi^t, \hat{w}) = \sum_{t=1}^s c(\chi^t, \hat{w}),$$ 
	and the flow cannot exceed the capacity. Hence, there exists an integral flow $f'$ of size $m$. This is a flow such that for each $b\in[m]$ there exists exactly one $t\in [s]$ such that  $f'(F^b, \chi^t) = 1$, and has value $0$ otherwise. On the other hand,  for each $t\in[s]$ there are exactly $\mu_t$ facets $F^b, b\in [m]$ such that $f'(F^b, \chi^t) = 1$. Since for each $t\in [s]$ there are exactly $\mu_t$ copies of $\chi^t$ in the representation $a = \sum_{b=1}^m \hat{\chi}^b$, there is a labeling of $\hat{\chi^1},\dots,\hat{\chi^m}$ such that $f'(F^b,\hat{\chi}^b)=1$ for all $b\in[m]$. This induces the desired assignment $\mset{(F^b, \hat{\chi}^t) \mid f'(F^b, \chi^t)=1}$.
\end{proof}

\begin{proposition}\label{prop:CE_always_exists}
	Let $\bundleStar\in \mset{0,1,\dots,m}^n$. Then there exists a point
	$\bundle\in \piInverse(\bundleStar)\cap mP(K_n)$ such that the following holds:
	For all faces $F^1,\dots,F^m$ of $P(K_n)$ containing
	$\bundle\in\sum_{b=1}^m F^b$ in their Minkowski sum, there exist
	vertices $\chi^b$ of $F^b$ such that $\bundle=\sum_{b=1}^m \chi^b$.
\end{proposition}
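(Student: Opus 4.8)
The plan is to mimic the strategy of \Cref{prop:decomp:const}, replacing the decomposition of $\bundle$ into \emph{disjoint} cliques by a decomposition into a \emph{nested chain} of cliques, and then to invoke \Cref{lem:perfect:matching}. First I would construct the witness point explicitly via the superlevel (staircase) sets of $\bundleStar$, realizing the informal splitting described after \Cref{th:CE}. For $k=1,\dots,m$ set $T_k = \{ i \in [n] : \bundleStar_i \geq k \}$, so that $T_1 \supseteq T_2 \supseteq \dots \supseteq T_m$ is a chain (some $T_k$ may be empty), and let $\bundle = \sum_{k=1}^m a_{T_k}$, the sum of the corresponding clique vertices of $P(K_n)$. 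A direct count gives $\bundle_i = |\{k : \bundleStar_i \geq k\}| = \bundleStar_i$, so $\pi(\bundle)=\bundleStar$, and $\bundle_{ij} = |\{k : \bundleStar_i \geq k,\ \bundleStar_j \geq k\}| = \min(\bundleStar_i,\bundleStar_j)$. Since $\bundle$ is visibly a sum of $m$ vertices of $P(K_n)$, we have $\bundle \in \piInverse(\bundleStar)\cap mP(K_n)$, which already settles the existence part; the content is that this particular $\bundle$ has the required vertex-decomposition property.

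Next, given faces $F^1,\dots,F^m$ with $\bundle\in\sum_{b=1}^m F^b$, I would write $\bundle = \sum_{b=1}^m \sum_{q\in V(F^b)}\lambda_q q$ with $\lambda_q>0$ and $\sum_{q\in V(F^b)}\lambda_q=1$, and identify which vertices $q=a_S$ can occur. The crucial observation is an edge identity: for any pair $i,j$ with $\bundleStar_i\leq\bundleStar_j$ we have $\min(\bundleStar_i,\bundleStar_j)=\bundleStar_i$, so $\sum_{b,q}\lambda_q q_{ij}=\sum_{b,q}\lambda_q q_i$; since every vertex of $P(K_n)$ satisfies $q_{ij}\leq q_i$ and all $\lambda_q$ are positive, this forces $q_{ij}=q_i$ for every occurring $q$. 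Unwinding this shows that every occurring $S$ is \emph{upward closed in level}, i.e. $i\in S$ and $\bundleStar_i\leq\bundleStar_j$ imply $j\in S$; combined with $\bundleStar_v=0\Rightarrow v\notin S$ (read off the vertex equation), this proves that each occurring vertex is exactly one of the chain cliques $a_{T_k}$.

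It then remains to verify the weight hypothesis of \Cref{lem:perfect:matching}, namely that the total mass placed on each distinct chain clique equals its multiplicity. Collecting mass, the vertex equations become a triangular system: ordering the distinct nonempty superlevel sets $U_1\supsetneq\dots\supsetneq U_p$ by their thresholds $d_1<\dots<d_p$ (and setting $d_0=0$), any $i$ with $\bundleStar_i=d_l$ gives $\sum_{l'\leq l}\Lambda_{U_{l'}}=d_l$, where $\Lambda_U$ denotes the mass on $a_U$. Solving from the bottom up yields $\Lambda_{U_l}=d_l-d_{l-1}$, which is exactly the number of indices $k$ with $T_k=U_l$; the leftover mass $m-d_p$ on the empty clique likewise matches its multiplicity $|\{k : T_k=\emptyset\}|$. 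This is precisely the condition $\mu_t\chi^t=\sum_{b}\sum_{q=\chi^t}\lambda_q q$, and \Cref{lem:perfect:matching} then provides the labeling $\hat{\chi}^b\in F^b$ with $\bundle=\sum_{b=1}^m\hat{\chi}^b$.

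I expect the main obstacle to be the atom-identification step. In contrast to \Cref{prop:decomp:const}, the chain cliques overlap, so one cannot argue componentwise that an occurring vertex lies in a single clique; the min-identity on the edge coordinates is what replaces disjointness. The delicate point is using the inequality $q_{ij}\leq q_i$ to pin down the \emph{individual} occurring vertices as superlevel sets, not merely their aggregate masses. Once that is in hand, the triangular solve for the masses and the max-flow/min-cut argument of \Cref{lem:perfect:matching} are routine.
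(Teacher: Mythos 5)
Your proposal is correct and follows essentially the same route as the paper's own proof: it uses the identical witness point ($\bundle_i=\bundleStar_i$, $\bundle_{ij}=\min\{\bundleStar_i,\bundleStar_j\}$, i.e.\ the sum of nested superlevel-set cliques), the same positivity argument with the vertex inequality $q_{ij}\le q_i$ to force the occurring vertices to be chain cliques, the same triangular solve for the masses, and the same final appeal to \Cref{lem:perfect:matching}. The differences are purely organizational (the paper identifies each occurring vertex via its minimal-value coordinate rather than via upward-closedness, and it checks the empty-clique multiplicity by a separate inequality, which your leftover-mass accounting handles equivalently), so there is nothing substantive to add.
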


\begin{proof}
	We define $\bundle$ as follows:
	\begin{equation*}
		\bundle_i= \bundleStar_i
		\qquad
		\bundle_{ij} = \min \left\{ \bundleStar_i, \bundleStar_j \right\}.
	\end{equation*}
	We show that $\bundle$ is the sum of characteristic vectors of
	complete graphs $G_{m}\subseteq\dots\subseteq G_{1}$. Let $t_0 = 0$
	and $t_1<t_2<\dots<t_s$ be the distinct non-zero values of
	$\bundleStar,$ i.e.	$\mset{\bundleStar_i\given
		i\in[n]}\setminus\{0\}=\mset{t_l\given l\in[s]}$. Let $\chi^{t_l}$
	be given by
	\begin{equation*}
		\chi^{t_l}_i= 
		\begin{cases}
			1, & \text{if}\ \bundle_i\geq t_l \\ 
			0, & \text{otherwise}
		\end{cases}\ \text{ for } i\in [n],
		\qquad
		\chi^{t_l}_{ij} = \chi^{t_l}_i \chi^{t_l}_j\ \text{ for } ij\in \binom{[n]}{2}.
	\end{equation*}
	Then $\chi^{t_l}$ is the characteristic vector of a complete graph
	$G_{t_l}$ and for a non-zero $\bundle_i = t_k$ we have
	\[
	a_i = t_k = \sum_{l=1}^k (t_l - t_{l-1} )
	= \sum_{l=1}^k (t_l - t_{l-1})\chi^{t_l}_i = \sum_{l=1}^s (t_l - t_{l-1}) \chi^{t_l}_i.
	\]
	An analogous statement holds for $\bundle_{ij} =\min\{a_i,a_j\}$ and thus
	\[
	\bundle = \sum_{l=1}^s (t_l - t_{l-1}) \chi^{t_l}
	\]
	for complete graphs $G_{t_s}\subsetneq \dots \subsetneq G_{t_1}$.
	Taking $t_l - t_{l-1}$ copies of $G_{t_l}$ and $m-t_s$ copies of the empty graph, we can write
	
	\[
	\bundle = \sum_{b=1}^m \hat{\chi}^{b}
	\]
	for graphs $G_m\subseteq \dots \subseteq G_1$, where $G_b = G_{t_l}$
	for any $b \in [m]$ such that $t_{l-1} + 1  \leq b \leq t_l $ and
	$\hat{\chi}^\letter$ denotes the characteristic vector of $G_\letter$. Thus, $\bundle$
	is the sum of $n$ vertices of $P(K_n)$ and is therefore contained in
	$\piInverse(\bundleStar)\cap mP(K_n)$. \\

	\begin{figure}[h]
		\centering
		\includegraphics[page=4]{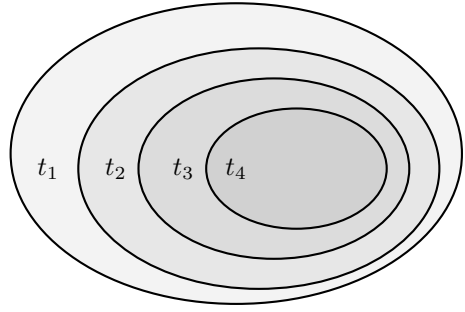}
		\caption{The vector $\bundle$ is the sum of characteristic vectors
			of nested complete graphs $G_{t_s}\subsetneq \dots \subsetneq
			G_{t_1}$.} 
		\label{fig:ex_2_3}
	\end{figure}
	
	Suppose $\bundle\in (F^1 + \dots + F^m) $, i.e.
	\[
	\bundle = \sum_{\letter=1}^m \sum_{\point\in V(F^\letter)}
	\lambda_\point \point,\quad
	\sum_{\point\in V(F^\letter)} \lambda_\point = 1  ,
	\]
	where $V(F^\letter)$ is a subset of vertices of the face
	$F^\letter$ and $\lambda_\point > 0$ for all $\point\in
	V(F^\letter)$. Note that all vertices of the polytope $P(K_n)$
	satisfy inequalities (i)--(v) for $m=1$. Similar to the proof of \Cref{prop:decomp:const}, we now show
	that $\bigcup_{\letter=1}^m V(F^\letter) = \left\{ \chi^{b} \given b\in
	[m] \right\}$ and deduce from this that 
	\[
	(t_l - t_{l-1}) \chi^{t_l} = \sum_{b=1}^{m} \sum_{\substack{q \in V(F^b) \\ q = \chi^{t_l}}} \lambda_q q.
	\]  Then \Cref{lem:perfect:matching} implies that
	there is a labeling of the faces such that $\hat{\chi}^b \in F^b$ for
	all $b\in [m]$. \\
	
	We begin with $\bigcup_{\letter=1}^m V(F^\letter) \subseteq \left\{ \hat{\chi}^{b} \given b\in
	[m] \right\}$. Let $\hat{\point}\in V(F^\letter)$ be a non-zero vector and $i\in[n]$
	such that $\bundle_i = \min_{j\in [n]} \mset{\bundle_j \given
		\hat{\point}_j = 1}$. Note that since $\lambda_{\hat{\point}}>0$ and
	$\hat{\point}_i =1$, we have $\bundle_i>0$, so $\bundle_i = t_k$ for
	some non-zero value.
	We show that $\hat{\point}=\chi^{t_k}$.
	Let $j\in [n]$. If $\bundle_j < t_k$, then $\hat{\point}_j = 0$ by the
	choice of $i$ and thus $\hat{\point}_{ij} = 0$ by (ii). If
	$\bundle_j\geq t_k$, then $\bundle_{ij} = \bundle_i = t_k$ by the
	definition of $\bundle$ and so $\hat{\point}_{ij}=\hat{\point}_i=1$,
	since otherwise (by (ii))
	\begin{equation}\label[ineq]{eq:x_implies_p}
		t_k = \bundle_{ij}
		= \sum_{\letter=1}^m \sum_{\point\in V(F^\letter)} \lambda_\point \point_{ij} 
		< \sum_{\letter=1}^m \sum_{\point\in V(F^\letter)} \lambda_\point \point_{i} 
		= \bundle_i = t_k. 
	\end{equation}
	Hence, we have $\hat{\point}_j = 1$ and so $\hat{\point}=\chi^{t_k}$. 
	Let $\hat{\point}=\mathbf{0}$ be the zero vector and $i\in [n]$ such
	that $\bundle_i = t_s$ attains the maximum value. Since
	$\hat{\point}_i = 0$, $\lambda_{\hat{\point}}>0$ and
	$\point_i\in\mset{0,1}^d$ for all $\point\in \bigcup_{\letter=1}^m
	V(F^\letter)$, we have 
	\begin{equation}\label[ineq]{ineq:case:0}
		t_s = \bundle_i =
		\sum_{\letter=1}^m \sum_{\point\in V(F^\letter)} \lambda_\point \point_{i} 
		< \sum_{\letter=1}^m \sum_{\point\in V(F^\letter)} \lambda_\point
		= \sum_{\letter=1}^m 1 = m
	\end{equation} 
	and therefore the number of characteristic vectors of the empty graph
	is $m- t_s > 0$, i.e. $\hat{\point}=\mathbf{0}\in \mset{\hat{\chi}^b \given
		b\in [m]} $.\\
	
	We now show $\bigcup_{\letter=1}^m V(F^\letter) \supseteq \left\{ \hat{\chi}^{b} \given b\in
	[m] \right\}$. Let $t_k$ be a non-zero value of $\bundle$. We show that
	there exists some $\point\in\bigcup_{\letter=1}^m V(F^\letter)$ such
	that $\point=\chi^{t_k}$. Let $i,j\in [n]$ such that $\bundle_i = t_k,
	\bundle_j = t_{k-1}$.
	Since $\bundle_i > \bundle_j$ and $\bigcup_{\letter=1}^m V(F^\letter)
	\subseteq \mset{\hat{\chi}^b \given b\in [m]}$, this implies
	\[\mset{ \chi^{t_l} \given t_l> t_{k-1}, i\in V(G_{t_l})} \cap
	\bigcup_{\letter=1}^m V(F^\letter) \neq \emptyset. \]
	Note that $i\in V(G_{t_l})$ implies that $\bundle_i \geq t_l$.  Thus,
	since otherwise $\bundle_i > t_k$, 
	\[\mset{ \chi^{t_l} \given t_l= t_{k}, i\in V(G_{t_l})} \cap
	\bigcup_{\letter=1}^m V(F^\letter) \neq \emptyset. \] 
	Suppose $\mathbf{0}\in \mset{\chi^b \given b\in [m]}$, i.e. there
	exists a characteristic vector of the empty graph and so $t_s <
	m$. Let $i\in [n]$ such that $\bundle_i = t_s$. \Cref{ineq:case:0}
	implies that there exists a $\hat{\point}\in  \bigcup_{b=1}^m V(F^b)$
	such that $\hat{\point}_i = 0$. By the above, we know that
	$\hat{\point}\in \mset{\hat{\chi}^b \given b\in [m]}$, which are
	characteristic vectors of nested graphs. In particular, all non-empty
	graphs contain the graph $G_{t_s}$ and hence the vertex $i$. Since
	$\hat{\point}$ is a characteristic vector of one of the nested graphs
	graphs, it must be the empty graph, i.e. $\hat{\point}=\mathbf{0}$. This shows that $\bigcup_{\letter=1}^m V(F^\letter) = \left\{ \hat{\chi}^{b} \given b\in
	[m] \right\}$. 
	
	Finally, we now show
	\[
	(t_l - t_{l-1}) \chi^{t_l} = \sum_{b=1}^{m} \sum_{\substack{q \in V(F^b) \\ q = \chi^{t_l}}} \lambda_q q
	\]  
	for all $l\in[s]$.
	Let $i\in [n]$ such that $a_i = t_l$ for some $l\in[s]$. Then $i\in V(G_{t_k})$ for all $k\leq l$ and $i\not \in V(G_{t_k})$ for all $k>l$. As $\bigcup_{\letter=1}^m V(F^\letter) = \left\{ \hat{\chi}^{b}
	\given j\in [m] \right\}$ and the graphs $G_{t_1},\dots,G_{t_s}$ are nested, we have that 
	\[
	t_l \chi^{t_k}_i = t_l = a_i = \sum_{b=1}^{m} \sum_{q \in V(F^b)} \lambda_q q_i = \sum_{b=1}^{m} \sum_{\substack{q \in V(F^b) \\ q \in \mset{\chi^{t_1},\dots,\chi^{t_l}}}} \lambda_q q_i.
	\]
	for all $k\leq l$. Hence,
	\[
	(t_l - t_{l-1}) \chi^{t_l}_i = (t_{l} \chi^{t_l}_i) - (t_{l-1} \chi^{t_{l-1}}_i) = \sum_{b=1}^{m} \sum_{\substack{q \in V(F^b) \\ q = \chi^{t_l}}} \lambda_q q_i.
	\]
	
	A similar statement holds for the edge $ij$ for $i,j\in V(G_{t_l})$. If $i\not\in V(G_{t_l})$ then 
	\[
	(t_l - t_{l-1}) \chi^{t_l}_i  = 0 = \sum_{b=1}^{m} \sum_{\substack{q \in V(F^b) \\ q = \chi^{t_l}}} \lambda_q q_i.
	\]
	and similarly for any $i,j\in[n]$ such that $ij$ is not an edge in $G_{t_l}$. Hence, all assumptions of 	\Cref{lem:perfect:matching} are fulfilled and implies that
	there is a labeling of the faces such that $\hat{\chi}^b \in F^b$ for
	all $b\in [m]$.
\end{proof} $ $

\subsection{Proofs of main results}

\begin{proof}[Proof of \Cref{thm:main}:]
	This is a reformulation of \Cref{th:const_size} for $\rr = 1$.
\end{proof}

\begin{proof}[Proof of \Cref{th:const_size}:]
	Let $\bundleStar\in \{0,\rr\}^n, m\geq \rr, a \in \piInverse(\bundleStar)$.
	By \Cref{prop:decomp:const}, for any faces $F^1, \dots, F^m$ of $P(K_n)$ such that $a \in \sum_{b=1}^{m} F^b$ holds that $a \in \sum_{b=1}^{m} \vertices(F^b)$.  \Cref{prop:key-long} implies that this holds if and only if for any set of valuations there exists an allocation and price $p\in\RR^d$ at which a competitive equilibrium exists.
\end{proof}

\begin{proof}[Proof of \Cref{th:CE}:]
	Let $\bundleStar\in \{0,1,\dots, m\}^n$.
	Then by \Cref{prop:CE_always_exists}, there exists a point $a \in \piInverse(\bundleStar)$, such that for any faces $F^1,\dots F^m$ of $P(K_n)$ containing $a \in \sum_{b=1}^{m} F^b$ holds that   $a \in \sum_{b=1}^{m} \vertices(F^b)$. Hence, by \Cref{cor:key-short},  for any set of valuations there exists an allocation and price at which a competitive equilibrium exists.
\end{proof}

\begin{proof}[Proof of \Cref{thm:main-general} and \Cref{thm:generalize-to-main}:]
	
	Let $\valuationtilde(\bundle) = \inner{\weighttilde,\bundle}$, where
	$\tilde{w}^b\in \RR^d$ is the vector in which we replace every entry of
	$\weight$ that has value $-\infty$ by the value $-M$ for a
	sufficiently large $M\in\RR$. 
	By assumption, the cliques $K_{\partition^b} ,b\in[m]$ form a
	covering of the graph $K_n$.
	Thus, we can find a partition of the vertices $[n]=V^1\sqcup\dots
	\sqcup V^m$ such that $V^b \subseteq \partition^b$. \\
	We consider the graph consisting of union of cliques, where the
	vertex sets of the cliques are the sets $V^1,\dots,V^m$.
	Let $\bundle\in \{0,1\}^d$ be the characteristic vector of this graph.
	According to \Cref{th:const_size}, given the valuations
	$\valuationtilde, b\in[m]$, we can find a price $\price\in\RR^d$ and
	an allocation $\bundle = \sum_{b=1}^m \bundle^b$ such that
	$\bundle^b \in \demand{\valuationtilde}{\price}$.
	More precisely, each $\bundle^b$ is the characteristic vector of the
	union of a subset of the cliques $V^1,\dots,V^b$. Furthermore, if
	$M$ is big enough then $\bundle^b \in
	\demand{\valuationtilde}{\price}$ implies that the graph
	corresponding to $\bundle^b$ is contained the graph $\partition^b$
	that agent $b$ has bid on (that is because if $a^b$ contains a
	vertex where $\weighttilde$ has value $-M$, then
	$\inner{\weighttilde-\price,\bundle^b} < 0 =
	\inner{\weighttilde-\price,\mathbf{0}} $). Thus, also the value
	$\inner{\weight-\price,\bundle^b}$ is finite and we have
	
	\[\inner{\weighttilde-\price,\bundle^b}=\inner{\weight-\price,\bundle^b}.\]
	Since in general we always have 
	\[\inner{\weighttilde-\price,\bundle}\geq \inner{\weight-\price,\bundle}\]
	for any $\bundle\in P(K_n)$, it follows that $\bundle^b \in
	\demand{\valuation}{\price}$ for the original valuations, for all
	$j\in [m]$, so competitive equilibrium at $a$ always exists.
\end{proof}

\newpage
\bibliographystyle{alpha}
\bibliography{references}

\end{document}